\documentclass[12 pt]{amsart}
\usepackage[cp1251]{inputenc}
\usepackage{amscd,amsfonts,amssymb,amsmath}
\usepackage{tikz-cd}
\usepackage[all]{xy}
\usepackage{xcolor}
\usepackage{placeins}
\usepackage{hyperref}

\newtheorem{theorem}{Theorem}[section]
\theoremstyle{definition}

\newtheorem{corollary}[theorem]{Corollary}
\newtheorem{lemma}[theorem]{Lemma}
\newtheorem{proposition}[theorem]{Proposition}
\newtheorem{question}[theorem]{Question}

\newtheorem{definition}[theorem]{Definition}
\newtheorem{example}[theorem]{Example}

\numberwithin{equation}{section}
\newtheorem*{ack}{Acknowledgement}

\newcommand{\Aut}{\operatorname{Aut}}

\newcommand{\Inn}{\operatorname{Inn}}

\newcommand{\Z}{\mathbb{Z}}

\newcommand{\id}{\mathrm{id}}

 \renewcommand{\phi}{\varphi}
\renewcommand{\epsilon}{\varepsilon}

\newcommand{\set}[1]{\{ #1 \}}

\newcommand{\gen}[1]{\langle #1 \rangle}



\DeclareMathOperator{\Id}{Id}


\makeatletter
\def\thmhead@plain#1#2#3{%
  \thmname{#1}\thmnumber{\@ifnotempty{#1}{ }\@upn{#2}}%
  \thmnote{ {\the\thm@notefont#3}}}
\let\thmhead\thmhead@plain
\makeatother

\makeatletter
\def\@tocline#1#2#3#4#5#6#7{\relax
  \ifnum #1>\c@tocdepth 
  \else
    \par \addpenalty\@secpenalty\addvspace{#2}%
    \begingroup \hyphenpenalty\@M
    \@ifempty{#4}{%
      \@tempdima\csname r@tocindent\number#1\endcsname\relax
    }{%
      \@tempdima#4\relax
    }%
    \parindent\z@ \leftskip#3\relax \advance\leftskip\@tempdima\relax
    \rightskip\@pnumwidth plus4em \parfillskip-\@pnumwidth
    #5\leavevmode\hskip-\@tempdima
      \ifcase #1
       \or\or \hskip 1em \or \hskip 2em \else \hskip 3em \fi%
      #6\nobreak\relax
    \hfill\hbox to\@pnumwidth{\@tocpagenum{#7}}\par
    \nobreak
    \endgroup
  \fi}
\makeatother

\setlength\oddsidemargin{.8mm}
\setlength\evensidemargin{.8mm}
\setlength\textheight{21cm}
\setlength\textwidth{16cm}
\title{Some properties of relative Rota--Baxter operators on groups}
\author[V.~G.~Bardakov, T.~A.~ Kozlovskaya, P.~ P. Sokolov, K.~ V. ~Zimireva, M.~ N. ~Zonov]{Valeriy G.~Bardakov, Tatyana A.~ Kozlovskaya, Pavel P. Sokolov, \\Ksenia V. ~Zimireva, Matvei N. ~Zonov}

\date{\today}

\begin{document}
\sloppy

\begin{abstract} We find connection between relative Rota--Baxter operators and usual Rota--Baxter operators. We prove that any relative Rota--Baxter operator on a group $H$ with respect to  $(G, \Psi)$ defines a Rota--Baxter operator on the semi-direct product $H\rtimes_{\Psi} G$. On the other side, we give condition under which a Rota--Baxter operator on the semi-direct product $H\rtimes_{\Psi} G$ defines a relative  Rota--Baxter operator on  $H$ with respect to  $(G, \Psi)$.
We introduce homomorphic post-groups and    find  their connection   with  $\lambda$-homomorphic skew left braces.
Further, we construct post-group on arbitrary group and a family post-groups which depends on integer parameter on any two-step nilpotent group. 
We find all verbal solutions of the 
quantum Yang-Baxter equation on two-step nilpotent group. \\

\textit{Keywords: Group, nilpotent group, semi-direct product, skew brace,  Rota--Baxter operator, relative Rota--Baxter operator, Yang--Baxter equation.} 

 \textit{Mathematics Subject Classification 2020:  17B38,  16T25.} 
\end{abstract}

\maketitle
\tableofcontents

\section{Introduction}

Rota--Baxter operators on  groups were 
introduced in 2021 by L. Guo, H. Lang, Y.~Sheng~\cite{Guo2020}, a  group  with a Rota--Baxter operator  is called a Rota--Baxter group (RB-group).
Properties of RB-groups are actively studied in \cite{BG}. Connection between RB-groups,  the Yang--Baxter equation and skew braces  was found in \cite{BG-1}. 
  The concept of  braces was introduced by Rump \cite{R2007} in 2007 in connection with non-degenerate involutive set theoretic solutions of the quantum Yang--Baxter equation.
    The concept of skew  braces was introduced by Guarnieri and Vendramin \cite{Vendramin} in 2017 in connection with non-involutive non-degenerate set theoretic solutions of the quantum Yang--Baxter equation. 
In \cite{Gon}  the Rota--Baxter operator was defined in cocommutative Hopf algebras. Important example of such algebras give   group rings $\Bbbk[G]$ of group $G$ over a field $\Bbbk$.

In \cite{JSZ} relative Rota--Baxter operators (RRB-operators) and relative Rota--Baxter groups (RRB-groups) were defined. In this case, the relative Rota--Baxter operator depends not only on the group, but also on the  group of its automorphisms.  In the case when this group of automorphisms is a group of inner automorphisms, we obtain the Rota--Baxter operator.  Some properties of relative Rota--Baxter operators are studied in \cite{RS-23}.  
Relative Rota--Baxter operators on an arbitrary Hopf algebra were defined in \cite{BN}.
In \cite{BB} Rota--Baxter and averaging operators on racks and rack algebras were introduced.

In the present paper we  prove some properties of relative Rota--Baxter operators on groups. In particular,  we are studying connections between relative Rota--Baxter operators and usual Rota--Baxter operators. 

The paper is structured as follows. The next section is devoted to review some facts about skew braces, $\lambda$-maps, post-groups, Rota--Baxter operators, relative Rot--Baxter operators, and some connections between them.

In Section \ref{RRB-RB} we find connection between relative Rota--Baxter operators and usual Rota--Baxter operators. We prove (see Proposition \ref{RR-R}) that any relative Rota--Baxter operator on a group $H$ with respect to  $(G, \Psi)$ defines a Rota--Baxter operator on the semi-direct product $H\rtimes_{\Psi} G$. On the another side, Theorem~\ref{R-RR} gives condition under which a Rota--Baxter operator on the semi-direct product $H\rtimes_{\Psi} G$ defines a relative  Rota--Baxter operator on 
$H$ with respect to  $(G, \Psi)$. An example  shows that the construction of this theorem  can map different Rota--Baxter operators to the same relative Rota--Baxter operator. Further,  we introduce relative Rota--Baxter operator of weight $-1$ and find (see Proposition \ref{PM}) connection between relative Rota--Baxter operators of weight 1 and $-1$. Corollary \ref{new} gives a way to construct new RRB-operators from  known RRB-operators.

It is known that if we have a RB-group or a RRB-group $(G, B)$, then we can construct a skew left brace $(G, \cdot, \circ_B)$.  In Section \ref{skew braces} we give example 
(Example \ref{SLB}) which shows that using RRB-operator $B \colon \Z_4 \to \Z_2\times \Z_2$, it is possible to  construct a non-trivial left brace $(\Z_4,+,\circ_B),$ where $(\Z_4,+)\cong \Z_4$ and $(\Z_4,\circ_B)\cong \Z_2\times \Z_2.$ On the other side, any Rota-Baxter left brace on $\Z_4$ is trivial. But Theorem \ref{Zp} shows that we can not take a prime number $p>2$ instead $2$. In this case any RRB-operator $B \colon \Z_{p^2} \to \Z_p\times \Z_p$ is a homomorphism and any relative Rota--Baxter left brace
on $\Z_{p^2}$ is trivial.

In Section \ref{YBE} we introduce homomorphic post-groups and    find (see Proposition~\ref{LSB-PG}) their connection  with  $\lambda$-homomorphic skew left braces.
Further, we construct post-group on arbitrary group and a family of post-groups  on any two-step nilpotent group which depends on an integer parameter. By these post-groups we can construct skew left braces. We show that new operation in these skew left braces defines a two-step nilpotent group, which can  be non isomorphic to the initial two-step nilpotent group. Moreover, it will be proved (in Proposition~\ref{SLB-LSLB}) that  any skew left brace constructed above is a $\lambda$-homomorphic skew left brace. 

As we have already mentioned above, any   skew  left braces  gives 
a non-involutive non-degenerate set theoretic solutions of the quantum Yang-Baxter equation (YBE) \cite{Vendramin}.  In Theorem~\ref{Sol} we find all verbal solutions of the 
quantum Yang-Baxter equation on two-step nilpotent group. 

\bigskip


\section{Preliminary results} \label{Prel}

Let $(G,\cdot)$ and $(G,\circ)$ be groups on a same set $G$. The triple $(G,\cdot, \circ)$ is said to be  a \textit{skew left brace} if
$$a\circ(b\cdot c)=(a\circ b)\cdot a ^{-1}\cdot (a\circ c),$$
where $a^{-1}$ is the inverse of $a$ with respect to the operation $\cdot.$ To get a skew right brace we can take another axiom, 
$$(b\cdot c) \circ a =(b\circ a)\cdot a ^{-1}\cdot (c \circ a).$$ A skew left brace which is also a  skew right brace is said to be  a skew two-sided brace.
If $(G,\cdot,\circ)$ is  a skew left brace, then the map $\lambda\colon (G, \circ) \rightarrow \Aut (G,\cdot),$ where $$\lambda_a(b)=a^{-1}\cdot(a\circ b),$$ is called the $\lambda$-\textit{map} of $(G,\cdot,\circ).$
Here we denoted $\lambda_a = \lambda(a)$ the image of the element $a \in G$.

A map of skew left braces $\varphi\colon G\rightarrow H$ is called a homomorphism of skew left braces if $\varphi(a\cdot b)=\varphi(a)\cdot\varphi(b)$ and $\varphi(a\circ b)=\varphi(a)\circ\varphi(b).$

The next definition was introduced in \cite{China_2023}.

\begin{definition} \label{PG}
Let $(G,\cdot)$ be a group and $\triangleright\colon G\times G \rightarrow G$ be a binary  operation on $G$. The algebraic system  $(G,\cdot,\triangleright)$ is called a \textit{post-group}, if the operator $L^{\triangleright}_a\colon G\rightarrow G,$ where $L^{\triangleright}_a(b)=a\triangleright b,$ is an automorphism of the group 
$(G, \cdot)$, and
$$a\triangleright(b\triangleright c) = \bigl(a\cdot(a\triangleright b)\bigr)\triangleright c.$$
\end{definition}

It is easy to see that the axiom of post-group is equivalent to the statement that for any $a, b \in G$ the following holds:
$$
L^{\triangleright}_a L^{\triangleright}_b = L^{\triangleright}_{a L^{\triangleright}_a(b).}
$$
Also remark that the  condition that  $L^{\triangleright}_a$ is an automorphism of $G$ means that
$$
a \triangleright (b \cdot c) = (a \triangleright b) \cdot (a \triangleright c)
$$
for any $a,b,c\in G,$ which is left distributivity of the operation $\triangleright$. Connection between skew left braces and post-groups gives the following theorem.

\begin{theorem}[\textbf{\cite{China_2023}}]\label{th braces <-> post-groups}
1) For any skew left brace $(G,\cdot,\circ)$ let $a\triangleright b := \lambda_a(b).$ Then $(G,\cdot,\triangleright)$ is a post-group.

2) For any post-group $(G,\cdot,\triangleright)$ let $a\circ b = a\cdot(a\triangleright b).$ Then $(G,\cdot,\circ)$ is a skew left brace.
\end{theorem}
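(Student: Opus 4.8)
The plan is to verify both statements by direct computation, organized around a single bridging identity, $a\cdot(a\triangleright b)=a\circ b$, relating the two operations, together with the fact that in each setting the family of left translations $\{\lambda_a\}$ (respectively $\{L^{\triangleright}_a\}$) constitutes a homomorphism from the circle group into $\Aut(G,\cdot)$. Before anything else I would record that in a skew left brace the two group structures share one identity $e$: substituting $a=e$ (the $\circ$-identity) into the brace axiom forces the $\cdot$-inverse of that element to be trivial, so the $\circ$-identity and the $\cdot$-identity coincide. This is used silently throughout.

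For part 1, starting from a skew left brace, I would first check that each $\lambda_a$ is a $\cdot$-endomorphism: expanding $\lambda_a(b\cdot c)=a^{-1}\cdot\bigl(a\circ(b\cdot c)\bigr)$ and applying the brace axiom yields $\lambda_a(b)\cdot\lambda_a(c)$ immediately, which is exactly the left distributivity $a\triangleright(b\cdot c)=(a\triangleright b)\cdot(a\triangleright c)$. The central step is the homomorphism identity $\lambda_{a\circ b}=\lambda_a\lambda_b$: I compute $\lambda_a\bigl(\lambda_b(c)\bigr)$, push the inner $\lambda_b(c)=b^{-1}\cdot(b\circ c)$ through $a\circ(-)$ by the brace axiom, use that $\lambda_a$ is a $\cdot$-homomorphism to collapse the $a\circ b^{-1}$ term, and arrive at $(a\circ b)^{-1}\cdot\bigl(a\circ(b\circ c)\bigr)$, which coincides with $\lambda_{a\circ b}(c)$ once $\circ$-associativity is invoked. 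Since $\lambda_e=\id$, this identity gives $\lambda_a\lambda_{\bar a}=\id$ where $\bar a$ is the $\circ$-inverse, so each $\lambda_a$ is invertible and hence an automorphism. The post-group axiom is then automatic: because $a\cdot\lambda_a(b)=a\circ b$, the required relation $L^{\triangleright}_aL^{\triangleright}_b=L^{\triangleright}_{a\cdot L^{\triangleright}_a(b)}$ is literally $\lambda_a\lambda_b=\lambda_{a\circ b}$.

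For part 2, starting from a post-group and setting $a\circ b=a\cdot(a\triangleright b)$, I would first pin down the identity: $L^{\triangleright}_e$ is idempotent, since the post-group axiom gives $L^{\triangleright}_eL^{\triangleright}_e=L^{\triangleright}_{e\cdot L^{\triangleright}_e(e)}=L^{\triangleright}_e$ (an automorphism fixes $e$), and being invertible it equals $\id$; hence $e\circ a=a$ and $a\circ e=a$. Associativity of $\circ$ is the main computation: expanding $(a\circ b)\circ c$, rewriting $(a\circ b)\triangleright c=\bigl(a\cdot(a\triangleright b)\bigr)\triangleright c=a\triangleright(b\triangleright c)$ via the post-group axiom, and then folding $(a\triangleright b)\cdot\bigl(a\triangleright(b\triangleright c)\bigr)=a\triangleright(b\circ c)$ by left distributivity recovers $a\circ(b\circ c)$. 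For inverses I would exhibit the right $\circ$-inverse $b=(L^{\triangleright}_a)^{-1}(a^{-1})$, so that $a\circ b=a\cdot a^{-1}=e$; as an associative monoid in which every element has a right inverse is a group, this suffices. Finally the brace axiom follows by expanding both sides through left distributivity and cancelling the $a^{-1}\cdot a$.

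I expect the principal obstacle to be the homomorphism identity $\lambda_{a\circ b}=\lambda_a\lambda_b$ in part 1 (equivalently, the verification of the group axioms for $\circ$ in part 2): these are the only places where associativity of the circle product genuinely interacts with the brace/post-group axiom, and the bookkeeping involving the two distinct inverses $a^{-1}$ and $\bar a$ must be handled carefully. Everything else reduces to formal cancellation once the bridging identity $a\cdot(a\triangleright b)=a\circ b$ and the automorphism property of the left translations are in place.
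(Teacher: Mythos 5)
Your proposal is correct and complete. Note that the paper itself gives no proof of this theorem: it is quoted from the cited reference \cite{China_2023}, so there is no in-paper argument to compare against. Your argument is the standard one — the bridging identity $a\circ b=a\cdot(a\triangleright b)$, the coincidence of the two identity elements, the homomorphism identity $\lambda_{a\circ b}=\lambda_a\lambda_b$ (which is literally the post-group axiom in operator form), and the routine verifications of associativity, inverses and the brace axiom all check out.
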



In \cite{Guo2020} were defined   Rota--Baxter operators of weights $\pm1$ on a group.

\begin{definition}[\cite{Guo2020}] \label{Def:1}
Let $G$ be a group.
\begin{itemize}
\item[(i)] A map $B \colon G\to G$ is called a {\it Rota--Baxter operator  of weight 1 } if
\[
B(g)B(h) = B\left( g B(g) h B(g)^{-1} \right), ~~~g, h\in G;
\]
\item[(ii)] a map $C \colon G\to G$ is called a {\it Rota--Baxter operator  of weight $-1$} if
\[
C(g) C(h) = C\left( C(g) h C(g)^{-1} g \right),~~~g, h\in G.
\]
\end{itemize}
A group endowed with an RB-operator is called a {\it Rota--Baxter group (RB-group)}.
\end{definition}

When we say on a Rota--Baxter operator we say on a Rota--Baxter operator of weight~1.

For two Rota--Baxter groups $(G,B_G)$ and $(H,B_H)$ a group homomorphism $\varphi\colon G\rightarrow H$ is called a \textit{homomorphism of Rota--Baxter groups}, if the following diagram commutes:
$$\xymatrix{G \ar[r]^{B_G} \ar[d]_{\varphi} & G \ar[d]_{\varphi} \\ H \ar[r]^{B_H} & H}$$

It has been shown in \cite{BG-1}, that for a Rota--Baxter group $(G, B)$ and the binary operation $\circ_B,$ defined as 
\begin{equation}\label{eq RBO to skew braces}
a\circ_B b = aB(a)bB(a)^{-1},
\end{equation}
the triple $(G,\cdot,\circ_B)$ is a skew left brace, which is called by {RB skew left brace}.

\begin{proposition}\label{prop RBO to skew braces}
The map from the category of Rota--Baxter groups to the category of skew left braces, given by equation \textnormal{(\ref{eq RBO to skew braces})}, that acts trivially on homomorphisms, is a functor.
\end{proposition}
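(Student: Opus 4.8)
The plan is to verify the two conditions that make a map a functor: preservation of identities and preservation of composition, applied to the assignment $(G,B)\mapsto(G,\cdot,\circ_B)$ on objects and the identity on morphisms.

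The plan is to verify the defining properties of a functor for the assignment that sends an object $(G,B)$ to the skew left brace $(G,\cdot,\circ_B)$ and sends a morphism to itself. Well-definedness on objects is exactly the statement recalled just before the proposition, namely that $(G,\cdot,\circ_B)$ is a skew left brace for any Rota--Baxter group $(G,B)$, as established in \cite{BG-1}. Since the assignment acts trivially on morphisms, the functor axioms concerning identities and composition---that $\id_G$ maps to $\id_G$ and that $\varphi\circ\psi$ maps to $\varphi\circ\psi$---hold automatically. Thus the only genuine content is to check that the assignment is well defined on morphisms.

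So the key step will be to show that whenever $\varphi\colon G\to H$ is a homomorphism of Rota--Baxter groups $(G,B_G)$ and $(H,B_H)$, the same map $\varphi$ is a homomorphism of the associated skew left braces $(G,\cdot,\circ_{B_G})$ and $(H,\cdot,\circ_{B_H})$. The first defining equality $\varphi(a\cdot b)=\varphi(a)\cdot\varphi(b)$ is immediate, since $\varphi$ is already a group homomorphism with respect to $\cdot$. For the second equality $\varphi(a\circ_{B_G}b)=\varphi(a)\circ_{B_H}\varphi(b)$, I would expand the left-hand side using the definition \eqref{eq RBO to skew braces}, apply multiplicativity of $\varphi$, and then invoke the commuting-diagram condition $\varphi\circ B_G=B_H\circ\varphi$ to replace $\varphi(B_G(a))$ by $B_H(\varphi(a))$. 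This turns $\varphi\bigl(aB_G(a)bB_G(a)^{-1}\bigr)$ into $\varphi(a)B_H(\varphi(a))\varphi(b)B_H(\varphi(a))^{-1}$, which is precisely $\varphi(a)\circ_{B_H}\varphi(b)$.

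There is no serious obstacle here: the computation is a short chain of equalities, and the only ingredient beyond formal manipulation is the intertwining relation $\varphi\circ B_G=B_H\circ\varphi$ packaged into the definition of a homomorphism of Rota--Baxter groups. Once these two structure-preservation equalities are in hand, the assignment is confirmed to be a well-defined functor.
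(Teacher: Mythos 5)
Your proposal is correct and matches the paper's proof: the paper likewise reduces everything to checking that a homomorphism of Rota--Baxter groups preserves $\circ_B$, carrying out exactly the chain of equalities you describe, expanding $\varphi\bigl(aB_G(a)bB_G(a)^{-1}\bigr)$ and applying the intertwining relation $\varphi B_G = B_H\varphi$. The only difference is that you also explicitly note the (trivially satisfied) identity and composition axioms, which the paper leaves implicit.
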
 

\begin{proof}
Let $G$ and $H$ be RotaBaxter groups, and $\phi\colon G \rightarrow H$ a homomorphism of Rota--Baxter groups. For any $a,b\in G$ we have
\begin{gather*}
\phi(a\circ_G b) = \phi(aB_G(a)bB_G(a)^{-1})=\phi(a)\phi B_G(a)\phi(b)\bigl(\phi B_G(a)\bigr)^{-1}= \\
= \phi(a) B_H\bigl(\phi(a)\bigr)\phi(b)B_H\bigl(\phi(a)\bigr)^{-1}=\phi(a)\circ_H \phi(b).
\end{gather*}
\end{proof}

The next definition can be found in \cite{JSZ}.

\begin{definition}
Let $G$ and $H$ be groups, and $\Psi\colon G\rightarrow \Aut H$ is an action of $G$ on $H.$ A map $B\colon H\rightarrow G$ is called a \textit{relative Rota--Baxter operator} (RRB-operator) on $H$ with respect to  $(G, \Psi)$ if 
$$B(h)B(k)=B\bigl(h\Psi_{B(h)}(k)\bigr),  \;\;h, k \in H.$$
The quadruple $(H, G, \Psi, B)$ is called a {\it relative Rota--Baxter group}.
\end{definition}

\begin{example}
If $H = G$ and $\Psi \colon G \to \Inn G$, $\Psi_g = \Psi(g) \colon x \mapsto g x g^{-1}$, $x \in G$, then
$$B(h)B(k)=B\bigl(h B(h) k B(h)^{-1}\bigr),  \;\;h, k \in H,$$
is the usual Rota--Baxter operator on $G,$ and $(G, B)$ is the Rota--Baxter group. 
\end{example}

As in the case of Rota--Baxter groups, the operation
$$
h \circ_B k = h \, \Psi_{B(h)}(k), \;\;h, k \in G,
$$
is a group operation on $G$ (see \cite[Proposition 3.5]{JSZ}).

The next question comes. 


\bigskip


\section{Rota--Baxter and relative Rota--Baxter operators} \label{RRB-RB}

\subsection{Relative Rota--Baxter operators and semi-direct products}

Recall that a semi-direct product $H\rtimes_{\Psi} G$ of groups $G$ and $H$ under the action $\Psi\colon G\rightarrow \Aut H$ is the set of pairs 
$$
H \times G = \{ (h, a)~|~ h \in H, a \in G \}
$$
with multiplication
$$
(h, a) (k, b) = (h \Psi_a(k), a b),~~h, k \in H, a, b \in G.
$$

The following proposition shows that any RRB-operator  defines RB-operator on a semi-direct product. 

\begin{proposition} \label{RR-R}
Let $(H, G, \Psi, B)$ be  a relative Rota--Baxter group. Then the operator 
$$
B' \colon H\rtimes_{\Psi} G \to H\rtimes_{\Psi} G,~~B'((h, a)) = (e, a^{-1} B(h)),~~h \in H, a \in G,
$$
is a Rota--Baxter operator on the semi-direct product $H\rtimes_{\Psi} G$.
\end{proposition}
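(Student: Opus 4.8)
The plan is to verify the weight-$1$ Rota--Baxter identity
$$
B'(x)\,B'(y) = B'\bigl(x\, B'(x)\, y\, B'(x)^{-1}\bigr)
$$
directly for arbitrary $x = (h,a)$ and $y = (k,b)$ in $H\rtimes_{\Psi} G$. The decisive structural observation is that $B'$ takes values in the subgroup $\{(e,g)\mid g\in G\}$, so every term appearing in the computation has the form $(e,g)$ or is obtained from such terms by the semi-direct multiplication; in particular I only ever need to track second coordinates once the first coordinate is seen to be trivial. I would first record two elementary facts: that $(e,c)^{-1} = (e, c^{-1})$, which is immediate from $\Psi_g(e) = e$ and the multiplication rule, and that $B'(x)\,B'(y) = (e,\, a^{-1}B(h)\,b^{-1}B(k))$, obtained by one application of the multiplication law. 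This gives the left-hand side.

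For the right-hand side I would build the argument of $B'$ step by step. The key simplification is
$$
x\,B'(x) = (h,a)\,(e, a^{-1}B(h)) = (h, B(h)),
$$
where the $G$-components $a$ and $a^{-1}B(h)$ collapse to $B(h)$ while the $H$-component is unchanged because $\Psi_a(e)=e$. Multiplying on the right by $y=(k,b)$ and then by $B'(x)^{-1} = (e, B(h)^{-1}a)$ yields
$$
x\,B'(x)\,y\,B'(x)^{-1} = \bigl(h\,\Psi_{B(h)}(k),\ B(h)\,b\,B(h)^{-1}\,a\bigr).
$$
Applying the definition of $B'$, the second coordinate of the right-hand side becomes $\bigl(B(h)\,b\,B(h)^{-1}\,a\bigr)^{-1} B\bigl(h\,\Psi_{B(h)}(k)\bigr)$.

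The crucial step, and the only place where the defining property of the RRB-operator enters, is to rewrite $B\bigl(h\,\Psi_{B(h)}(k)\bigr) = B(h)\,B(k)$. After that, the inverse factor expands as $a^{-1}B(h)\,b^{-1}B(h)^{-1}$, the adjacent $B(h)^{-1}B(h)$ cancels, and the second coordinate reduces to exactly $a^{-1}B(h)\,b^{-1}B(k)$, matching the left-hand side. I do not anticipate a genuine obstacle: the difficulty is purely organizational, namely keeping the non-commuting factors $B(h)\,b\,B(h)^{-1}$ and the inverse $B'(x)^{-1}$ in the correct order inside the semi-direct product, and applying the relative Rota--Baxter identity at the precise moment the argument of $B$ has acquired the form $h\,\Psi_{B(h)}(k)$.
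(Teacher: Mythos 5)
Your proposal is correct and follows essentially the same route as the paper: both verify the weight-$1$ identity directly for $x=(h,a)$, $y=(k,b)$, compute $x\,B'(x)\,y\,B'(x)^{-1}=\bigl(h\,\Psi_{B(h)}(k),\,B(h)\,b\,B(h)^{-1}a\bigr)$, and reduce the required equality to the relative Rota--Baxter identity $B(h)B(k)=B\bigl(h\,\Psi_{B(h)}(k)\bigr)$. The only cosmetic difference is that you substitute the RRB identity into the right-hand side and simplify, while the paper equates the two sides and extracts the RRB identity as the condition to be checked.
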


\begin{proof}
We need to check the equality
$$
B'(u)\,  B'(v) = B' \left(u \,  B'(u) \, v\,  B'(u)^{-1}  \right),~~u, v \in H\rtimes_{\Psi} G.
$$
If $u = (h, a)$, $v = (k, b)$, $h, k \in H$, $a, b \in G$, then the left hand side,
$$
B'(u) \, B'(v) =B'((h, a)) \, B'((k, b)) = (e, a^{-1} \, B(h)) \, (e, b^{-1}\, B(k)) = 
$$
$$
=\left( \Psi_{a^{-1} \, B(h)}(e),  a^{-1} \, B(h) \, b^{-1} \, B(k) \right)=  \left( e,  a^{-1} \, B(h) \, b^{-1} \, B(k) \right).
$$
The right hand side,
$$
B' \left(u \,  B'(u) \, v \, B'(u)^{-1}  \right) = B' \left((h, a)\,  B'((h, a)) \, (k, b) \, B'((h, a))^{-1}  \right) = 
$$
$$
 =  B' \left( (h \, \Psi_a(e), B(h)) \, (k \Psi_b(e), b \, B(h)^{-1} \, a) \right) = B' \left( (h \, \Psi_{B(h)}(k \Psi_b(e)), \, B(h) \,  b \, B(h)^{-1} \, a) \right) 
$$
$$
=  B' \left( (h \, \Psi_{B(h)}(k), \, B(h) \,  b \, B(h)^{-1} \, a) \right) =  \left( e,  \, a^{-1}  B(h) \, b^{-1} \, B(h)^{-1} \, B( h \, \Psi_{B(h)}(k)) \right).
$$
Comparing the left hand side and right hand side, we get
$$
 a^{-1} \, B(h) \, b^{-1} \, B(k) = a^{-1}  B(h) \, b^{-1} \, B(h)^{-1} \, B( h \, \Psi_{B(h)}(k)).
$$
Hence,
$$
 B(h) \,  B(k) =  B( h \, \Psi_{B(h)}(k)),~~h, k \in H.
$$
Since $B$ is a RRB-operator on $H$, this equality holds. 
\end{proof}

We will now present a construction that allows one to build  relative Rota-Baxter operators using  Rota--Baxter operators on semi-direct products.

Let $H\rtimes_{\Psi} G$ be a semi-direct product of groups $H$ and $G$ with respect to some left action $\Psi$ of $G$ on $H.$ Let $B\colon H\rtimes_\Psi G \rightarrow H\rtimes_\Psi G$ be a Rota--Baxter operator.  Consider the projections $\pi_H\colon H\rtimes_{\Psi} G \rightarrow H$ and $\pi_G\colon H\rtimes_{\Psi} G \rightarrow G,$ as well as the restriction
$$B|_H\colon H\rightarrow H\rtimes_{\Psi} G$$
of $B$ to $H.$ Note that $\pi_H$ is not necessarily a group homomorphism and the image of $B|_H$ does not necessary lie in $H$.

The next theorem gives possibilities to construct RRB-operators, using RB-operators on semi-direct products. Constructions of RB-operators on semi-direct and, in particular, on direct products of groups can be found in \cite{BG-1}.

\begin{theorem}\label{R-RR}
Let $B\colon H\rtimes_\Psi G \rightarrow H\rtimes_\Psi G$ be a Rota--Baxter operator. If the image of the map $\pi_H B|_H $ lies in the center of $H,$ then the composition 
$$\pi_G B|_H\colon H\rightarrow G$$ is a relative Rota--Baxter operator with respect to  $(G, \Psi)$.
\end{theorem}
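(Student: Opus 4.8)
The plan is to evaluate the Rota--Baxter identity for $B$ on the pair $u=(h,e)$, $v=(k,e)$ lying in the copy of $H$ inside $H\rtimes_\Psi G$, and then to read off the claimed identity from the $G$-component. Write $\beta := \pi_H B|_H \colon H \to H$ and $\gamma := \pi_G B|_H \colon H \to G$, so that $B(h,e) = (\beta(h), \gamma(h))$; the hypothesis is that $\beta(h)$ lies in the center of $H$ for every $h\in H$, and the goal is to prove $\gamma(h)\gamma(k) = \gamma\bigl(h\,\Psi_{\gamma(h)}(k)\bigr)$ for all $h,k\in H$.

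First I would compute the left-hand side of the Rota--Baxter identity directly from the semi-direct product multiplication, obtaining
$$
B(u)B(v) = \bigl(\beta(h)\,\Psi_{\gamma(h)}(\beta(k)),\ \gamma(h)\gamma(k)\bigr).
$$
Next I would compute the argument $w := u\,B(u)\,v\,B(u)^{-1}$ of the right-hand side, using the inverse formula $B(u)^{-1} = \bigl(\Psi_{\gamma(h)^{-1}}(\beta(h)^{-1}),\ \gamma(h)^{-1}\bigr)$ in the semi-direct product. Two features make this tractable. Since $\pi_G$ is a group homomorphism, the $G$-component of $w$ telescopes to $\gamma(h)\,\gamma(h)^{-1}=e$; thus $w$ again lies in the copy of $H$, so that $B|_H$ applies to it and the $G$-component of $B(w)$ is $\gamma(\pi_H w)$. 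A short computation then shows that the $H$-component of $w$ equals $h\,\beta(h)\,\Psi_{\gamma(h)}(k)\,\beta(h)^{-1}$.

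This is exactly where the centrality hypothesis enters, and it is the only nontrivial step. Because $\beta(h)$ is central in $H$, conjugation by it is trivial, so $\beta(h)\,\Psi_{\gamma(h)}(k)\,\beta(h)^{-1} = \Psi_{\gamma(h)}(k)$, and the $H$-component of $w$ collapses to $h\,\Psi_{\gamma(h)}(k)$. Hence $w = \bigl(h\,\Psi_{\gamma(h)}(k),\,e\bigr)$ and the right-hand side of the identity becomes $B(w) = \bigl(\beta(h\,\Psi_{\gamma(h)}(k)),\ \gamma(h\,\Psi_{\gamma(h)}(k))\bigr)$.

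Finally, comparing the $G$-components of $B(u)B(v)$ and $B(w)$ yields $\gamma(h)\gamma(k) = \gamma\bigl(h\,\Psi_{\gamma(h)}(k)\bigr)$, which is precisely the relative Rota--Baxter condition for $\gamma = \pi_G B|_H$ with respect to $(G,\Psi)$. (The $H$-components give only an auxiliary relation for $\beta$, which is not needed for the conclusion.) The main obstacle is conceptual rather than computational: one must recognize that the centrality of $\image(\pi_H B|_H)$ is exactly what is required to eliminate the conjugation by $\beta(h)$, so that the argument $w$ reduces to the form $h\,\Psi_{\gamma(h)}(k)$ demanded by the RRB-identity; without it, one would instead land on the conjugate $h\,\beta(h)\,\Psi_{\gamma(h)}(k)\,\beta(h)^{-1}$, and the comparison of $G$-components would fail to produce the relative Rota--Baxter equation.
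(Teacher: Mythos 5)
Your proposal is correct and follows essentially the same route as the paper's proof: evaluate the Rota--Baxter identity for $B$ on elements of the copy of $H$ in $H\rtimes_\Psi G$, compare the $G$-components of both sides, and use the centrality of $\pi_H B(h)$ to collapse the conjugation so that the argument of $B$ on the right-hand side becomes $h\,\Psi_{\pi_G B(h)}(k)$. Your version merely organizes the computation more explicitly in terms of semi-direct product coordinates, but the decomposition and the key use of the hypothesis are identical to the paper's.
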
 
\begin{proof}
Let $h,k\in H.$ Since $B$ is a Rota-Baxter operator, we have
$$B(h)B(k) = B\bigl(hB(h)kB(h)^{-1}\bigr).$$

Now, express $B$ as a product of $\pi_HB$ and $\pi_GB.$ We have
\begin{equation}\label{eq semidirect rrbo 1}
B(h)B(k) = \pi_HB(h)\pi_GB(h)\pi_HB(k)\pi_GB(k) = \pi_HB(h)\pi_HB(k)^{\bigl(\pi_GB(h)\bigr)^{-1}}\cdot \pi_GB(h)\pi_GB(k)
\end{equation}
and
\begin{equation}\label{eq semidirect rrbo 2}
B(hB(h)kB(h)^{-1}) = \pi_HB\bigl(hB(h)kB(h)^{-1}\bigr)\cdot \pi_GB\bigl(h\pi_HB(h)\pi_GB(h)k\pi_GB(h)^{-1}\pi_HB(h)^{-1}\bigr).
\end{equation}

Since a semi-direct product of groups is a direct product of sets, $a=b$ if and only if $\pi_H(a)=\pi_H(b)$ and $\pi_G(a)=\pi_G(b).$ By applying this reasoning to expressions (\ref{eq semidirect rrbo 1}) and (\ref{eq semidirect rrbo 2}), we obtain:
$$\pi_GB(h)\pi_GB(k) = \pi_GB\bigl(h\pi_HB(h)\pi_GB(h)k\pi_GB(h)^{-1}\pi_HB(h)^{-1}\bigr).$$

Note that $\pi_HB(h)\in Z(H),$ and that $\pi_GB(h)k\pi_GB(h)^{-1}=\Psi_{\pi_GB(h)}(k)\in H.$ We can now simplify:
$$\pi_GB\bigl(h\pi_HB(h)\pi_GB(h)k\pi_GB(h)^{-1}\pi_HB(h)^{-1}\bigr) = \pi_GB\bigl(h\Psi_{\pi_GB(h)}(k)\bigr)$$
and obtain
$$\pi_GB(h)\pi_GB(k) = \pi_GB\bigl(h\Psi_{\pi_GB(h)}(k)\bigr),$$
which shows that $\pi_GB|_H$ is indeed a relative Rota--Baxter operator.
\end{proof}

We will now provide an example that shows  that a Rota--Baxter operator on $H\rtimes_{\Psi} G$ does not necessarily commute with the projection $\pi_G\colon H \rtimes_{\Psi} G \rightarrow G,$ and thus, does not necessarily induce a Rota--Baxter operator on $G.$ 


\begin{example}
For the group $S_3=A_3\rtimes \gen{s_1}$ consider the Rota--Baxter operator $B\colon S_3\rightarrow S_3,$ defined as
$$\xymatrix @R=1pc @C=1pc {
B(1) = 1, & B(s_1) = s_1s_2, & B(s_2) = 1, \\
B(s_1s_2) = s_2s_1,  & B(s_2s_1) = s_1 s_2, & B(s_1s_2s_1) = s_2s_1.
}$$

Note that $\pi_{\gen{s_1}}\bigl(B(s_1)\bigr) = \pi_{\gen{s_1}}(s_1s_2) = 1$ and $B\bigl(\pi_{\gen{s_1}}(s_1)\bigr)=B(s_1)=s_1s_2,$ which means that $\pi_{\gen{s_1}} B\neq B\pi_{\gen{s_1}}.$
\end{example}

We will now provide an example, that shows, the the construction of Theorem \ref{R-RR} can map different Rota--Baxter operators to the same relative Rota--Baxter operator.

\begin{example}
Let $G$ and $H$ be groups, and $H$ be abelian. Let $\Psi$ be an action of $G$ on $H.$ Define Rota--Baxter operators $B_{-1}$ and $B_e$ on $H\rtimes G$ by the following way: $B_{-1}(x)=x^{-1},$ $B_e(x)=e.$ In general, the Rota--Baxter groups, defined by these operators, are not isomorphic, which can be checked by applying Proposition \ref{prop RBO to skew braces}. Note that the image of $\pi_H B_{-1}|_H$ lies in the center of $H,$ because $H=Z(H)$, and the image of $\pi_H B_e|_H$ is trivial. At the same time,
$$\pi_GB_{-1}(h)=\pi_G(h^{-1})=e=\pi_GB_e(h),$$
which means that the relative Rota--Baxter operators, obtained by applying Theorem \ref{R-RR} to operators $B_{-1}$ and $B_e,$ are equal.
\end{example}

\medskip

By analogy with RB-operators of weight $-1$ (see Definition \ref{Def:1}) we introduce RRB-operators of weight $-1$.

\begin{definition}
Let $G$ and $H$ be groups and $\Psi\colon G\rightarrow \Aut H$ be an action of $G$ on $H.$ A~map $C \colon H\rightarrow G$ is called a \textit{relative Rota--Baxter operator} of weight $-1$ with respect to  $(G, \Psi)$, if 
$$
C(h) \cdot C(k)=C\bigl(\Psi_{C(h)}(k) \cdot h\bigr),~~~h, k \in H.$$
\end{definition}

\begin{proposition} \label{PM}
Let $G$ and $H$ be groups, $\Psi\colon G\rightarrow \Aut H$ be an action, and $B\colon H\rightarrow G$ be a relative Rota--Baxter operator. Then

1)  the map $C,$ defined as $C(h)=B(h^{-1}),$ $h \in H$ is a relative Rota--Baxter operator of weight $-1.$

2) If $\varphi\in \Aut H,$ $\psi\in \Aut G,$ and for any $g\in G$ the following equality holds: 
$$\varphi^{-1}\Psi_g\varphi=\Psi_{\psi(g)},$$
then the composition $\psi B \varphi$ is a relative Rota--Baxter operator.
\end{proposition}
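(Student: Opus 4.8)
The plan is to verify each of the two relative Rota--Baxter identities by direct substitution, using the defining identity of $B$ together with the compatibility hypotheses. Throughout I will write the relative Rota--Baxter identity for $B$ in the form $B(h)B(k)=B\bigl(h\,\Psi_{B(h)}(k)\bigr)$ and expand both sides of each claimed identity into expressions involving $B$ evaluated at products in $H$.

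For part 1), I would compute both sides of the weight $-1$ identity for $C$. The left side is $C(h)C(k)=B(h^{-1})B(k^{-1})$, and applying the defining identity for $B$ to the pair $(h^{-1},k^{-1})$ rewrites this as $B\bigl(h^{-1}\,\Psi_{B(h^{-1})}(k^{-1})\bigr)$. The right side is $C\bigl(\Psi_{C(h)}(k)\cdot h\bigr)=B\bigl(\bigl(\Psi_{C(h)}(k)\cdot h\bigr)^{-1}\bigr)=B\bigl(h^{-1}\cdot\Psi_{C(h)}(k)^{-1}\bigr)$. Since $C(h)=B(h^{-1})$ and each $\Psi_g$ is an automorphism of $H$, we have $\Psi_{C(h)}(k)^{-1}=\Psi_{B(h^{-1})}(k^{-1})$, so the two sides coincide. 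The only point to be careful about is the interaction of the inverse with the automorphism $\Psi_{C(h)}$, which is immediate because $\Psi_{C(h)}$ is a homomorphism.

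For part 2), set $D=\psi B\varphi$ and compute $D(h)D(k)=\psi\bigl(B(\varphi(h))\bigr)\psi\bigl(B(\varphi(k))\bigr)=\psi\bigl(B(\varphi(h))\,B(\varphi(k))\bigr)$, using that $\psi$ is a homomorphism. Applying the defining identity for $B$ to the pair $(\varphi(h),\varphi(k))$ turns the inner expression into $B\bigl(\varphi(h)\,\Psi_{B(\varphi(h))}(\varphi(k))\bigr)$. The goal is to show this equals $D\bigl(h\,\Psi_{D(h)}(k)\bigr)=\psi B\bigl(\varphi(h)\,\varphi\Psi_{D(h)}(k)\bigr)$, so it suffices to prove $\Psi_{B(\varphi(h))}(\varphi(k))=\varphi\Psi_{D(h)}(k)$, i.e.\ $\varphi^{-1}\Psi_{B(\varphi(h))}\varphi=\Psi_{D(h)}=\Psi_{\psi(B(\varphi(h)))}$. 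This is exactly the hypothesis $\varphi^{-1}\Psi_g\varphi=\Psi_{\psi(g)}$ applied with $g=B(\varphi(h))$.

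The calculations here are entirely routine; the one place that requires genuine attention is the bookkeeping in part 2), namely recognizing that the conjugation hypothesis must be applied precisely at the element $g=B(\varphi(h))\in G$ and that the definition $D(h)=\psi(B(\varphi(h)))$ makes $\Psi_{D(h)}=\Psi_{\psi(B(\varphi(h)))}$ line up with the right-hand side of the hypothesis. No step presents a real obstacle, so the main task is simply to arrange the substitutions so that the defining identity of $B$ and the compatibility relation fit together cleanly.
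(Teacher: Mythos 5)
Your proposal is correct and follows essentially the same route as the paper: part 1) is the same direct computation using $\Psi_{B(h^{-1})}(k^{-1})=\Psi_{C(h)}(k)^{-1}$ to match $B\bigl(h^{-1}\Psi_{B(h^{-1})}(k^{-1})\bigr)$ with $C\bigl(\Psi_{C(h)}(k)\cdot h\bigr)$, and part 2) is the same chain of substitutions, pulling $\varphi$ out and applying the conjugation hypothesis at $g=B(\varphi(h))$. No gaps.
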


\begin{proof}
1) We can check directly: for any $h,k\in H$ we have
$$C(h)C(k)=B(h^{-1})B(k^{-1})=B\bigl(h^{-1}\Psi_{B(h^{-1})}(k^{-1})\bigr)=\bigl((\Psi_{C(h)}(k)h)^{-1}\bigr) = C\bigl(\Psi_C(h)(k)h\bigr).$$

2) For any $h,k\in H$ we have
$$\psi B \phi(h)\cdot \psi B \phi(k) = \psi B\bigl(\phi(h)\Psi_{B\phi(h)}(\phi(k))\bigr)
= \psi B\phi\bigl(h\phi^{-1}\Psi_{B\phi(h)}(\phi(k))\bigr)= \psi B\phi\bigl(h\Psi_{\psi B\phi(h)}(k)\bigr).$$
\end{proof}

\begin{corollary} \label{new}
Let $(H, G, \Psi, B)$ be a relative Rota--Baxter group and $\varphi\in Z\bigl(\Aut H\bigr).$
Then $(H, G, \Psi, B \varphi)$  is a relative Rota--Baxter group.
\end{corollary}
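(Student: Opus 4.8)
The plan is to obtain this as an immediate specialization of part 2) of Proposition \ref{PM}, taking $\psi$ to be the identity automorphism of $G$. Recall that part 2) guarantees that $\psi B \varphi$ is a relative Rota--Baxter operator whenever $\varphi \in \Aut H$, $\psi \in \Aut G$, and the compatibility condition $\varphi^{-1} \Psi_g \varphi = \Psi_{\psi(g)}$ holds for every $g \in G$. Since in our situation $\varphi \in Z(\Aut H) \subseteq \Aut H$, the map $\varphi$ already qualifies as the first input; it remains only to produce a suitable $\psi$ and verify the compatibility condition.

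First I would set $\psi = \id_G$. With this choice the compatibility condition collapses to $\varphi^{-1} \Psi_g \varphi = \Psi_g$ for all $g \in G$, which is equivalent to the assertion that $\varphi \Psi_g = \Psi_g \varphi$ in $\Aut H$, i.e.\ that $\varphi$ commutes with each $\Psi_g$.

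The single point to verify is therefore that $\varphi$ commutes with every $\Psi_g$, and this is exactly where the hypothesis $\varphi \in Z(\Aut H)$ (rather than merely $\varphi \in \Aut H$) is used. Since $\Psi \colon G \to \Aut H$ is an action, each $\Psi_g$ is an element of $\Aut H$, and an element of the center $Z(\Aut H)$ commutes with every element of $\Aut H$, in particular with $\Psi_g$. Hence the compatibility condition is satisfied and Proposition \ref{PM} 2) applies with the pair $(\varphi, \id_G)$.

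Finally I would read off the conclusion: the resulting operator is $\psi B \varphi = \id_G \circ B \circ \varphi = B\varphi$, which is therefore a relative Rota--Baxter operator on $H$ with respect to $(G, \Psi)$, so that $(H, G, \Psi, B\varphi)$ is a relative Rota--Baxter group, as claimed. I do not anticipate any genuine obstacle here; the entire content of the corollary lies in recognizing that the centrality of $\varphi$ is precisely what makes the $\psi = \id_G$ case of Proposition \ref{PM} 2) available, so the argument is a short deduction rather than a fresh computation.
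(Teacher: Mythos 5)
Your proposal is correct and matches the paper's own proof exactly: both apply Proposition \ref{PM}(2) with $\psi = \id_G$ and use the centrality of $\varphi$ in $\Aut H$ to verify that $\varphi^{-1}\Psi_g\varphi = \Psi_g$ for every $g \in G$. Your write-up is in fact slightly more careful than the paper's (which has a couple of typos in the displayed identity), but the argument is the same.
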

\begin{proof}
For any $g\in G$ the automorphism $\Psi_g\in \Aut H$ commutes with $\varphi,$ and we have $\phi^{-1}\Psi \phi=\Psi_g.$ Therefore, $\psi B \phi$ is a relative Rota--Baxter operator, where $\psi$ is the identity automorphism. 
\end{proof}

\bigskip


\section{Skew braces from RB- and RRB-operators} \label{skew braces}
As we know (see Section \ref{Prel}), if $(G, \cdot)$ is a group, $B \colon G \to G$ is a RB-operator, then $(G, \cdot, \circ_B)$ is a skew left brace, which is called a {\it Rota-Baxter skew left brace}, where
$$
a \circ_B b = a B(a) b B(a)^{-1},~~~a, b \in G.
$$
 The following lemma is evident.

\begin{lemma} \label{AbSB}
If $(G, \cdot)$ is an abelian group, then

1) any RB-operator on $G$ is an endomorphism,

2) Any Rota--Baxter skew left brace $(G, \cdot, \circ_B)$ is trivial which means $a \circ_B b = a \cdot b$ for any $a, b \in G$.
\end{lemma}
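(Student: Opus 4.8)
The plan is to exploit the single fact that in an abelian group every conjugation acts trivially, so the twisting present in both the Rota--Baxter axiom and the brace operation disappears. This makes both parts immediate, so I expect no genuine obstacle; the only point requiring care is recording exactly where commutativity is used.

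For part 1, I would start from the defining identity of a Rota--Baxter operator of weight $1$ from Definition \ref{Def:1}, namely
$$
B(g) B(h) = B\bigl( g\, B(g)\, h\, B(g)^{-1} \bigr), \qquad g, h \in G.
$$
Since $(G, \cdot)$ is abelian, conjugation is trivial, so $B(g)\, h\, B(g)^{-1} = h$, and the right-hand side collapses to $B(g \cdot h)$. Thus $B(g) B(h) = B(g \cdot h)$ for all $g, h \in G$, which is precisely the statement that $B$ is an endomorphism of $(G, \cdot)$.

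For part 2, I would substitute directly into the defining formula of the Rota--Baxter skew left brace,
$$
a \circ_B b = a\, B(a)\, b\, B(a)^{-1}, \qquad a, b \in G,
$$
and again invoke commutativity: moving $B(a)^{-1}$ past $b$ gives $a\, B(a)\, B(a)^{-1}\, b = a \cdot b$. Hence $a \circ_B b = a \cdot b$ for all $a, b \in G$, so the skew left brace is trivial, completing the proof.
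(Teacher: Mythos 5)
Your proof is correct and is exactly the intended argument: the paper itself gives no proof, declaring the lemma evident, and the evident reason is precisely the triviality of conjugation in an abelian group that you spell out. Both parts are handled correctly.
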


\begin{theorem}[\textbf{\cite{China_2023}}]\label{th RRBO to post-group}
Let $B\colon H\rightarrow G$ be a relative Rota--Baxter operator with respect to  $(G, \Psi)$. Put
$$h\triangleright k = \Psi_{B(h)}(k).$$
for any $h,k\in H.$
Then $(H,\cdot,\triangleright)$ is a post-group.
\end{theorem}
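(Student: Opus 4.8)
The plan is to verify the two defining conditions of a post-group from Definition \ref{PG} directly, since the statement is really an unpacking of the relative Rota--Baxter identity.

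First I would check that for each $h \in H$ the left-translation operator $L^{\triangleright}_h \colon H \to H$, $L^{\triangleright}_h(k) = h \triangleright k = \Psi_{B(h)}(k)$, is an automorphism of $(H, \cdot)$. This is immediate: since $\Psi \colon G \to \Aut H$ is an action, the element $\Psi_{B(h)}$ is by definition an automorphism of $H$, and $L^{\triangleright}_h$ is literally this automorphism. In particular the left-distributivity $h \triangleright (k \cdot l) = (h \triangleright k) \cdot (h \triangleright l)$ comes for free.

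The substantive part is the associativity-type axiom
$$h \triangleright (k \triangleright l) = \bigl(h \cdot (h \triangleright k)\bigr) \triangleright l, \qquad h, k, l \in H.$$
I would expand both sides using $\triangleright = \Psi_{B(-)}(-)$. For the left side, using that $\Psi$ is a homomorphism $G \to \Aut H$ and hence sends products in $G$ to compositions of automorphisms,
$$h \triangleright (k \triangleright l) = \Psi_{B(h)}\bigl(\Psi_{B(k)}(l)\bigr) = \Psi_{B(h) B(k)}(l).$$
For the right side,
$$\bigl(h \cdot (h \triangleright k)\bigr) \triangleright l = \Psi_{B(h \cdot \Psi_{B(h)}(k))}(l).$$

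The key step --- and the only place the hypothesis on $B$ is used --- is to apply the relative Rota--Baxter identity $B(h) B(k) = B\bigl(h \Psi_{B(h)}(k)\bigr)$ to rewrite the argument of $\Psi$ on the right side, which gives $B\bigl(h \cdot \Psi_{B(h)}(k)\bigr) = B(h) B(k)$. Hence the right side also equals $\Psi_{B(h) B(k)}(l)$, matching the left side. I expect no real obstacle here beyond correctly tracking that $\Psi$ converts products in $G$ into compositions in $\Aut H$; once the RRB-identity is invoked, the two sides collapse to the same expression, and both post-group axioms are established.
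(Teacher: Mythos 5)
Your proof is correct, and it is the standard direct verification: the paper itself states this theorem as a cited result from the post-group literature and gives no proof, so there is nothing to diverge from. Both steps check out --- $L^{\triangleright}_h=\Psi_{B(h)}$ is an automorphism because $\Psi$ lands in $\Aut H$, and the weighted associativity axiom reduces, via $\Psi_{B(h)B(k)}=\Psi_{B(h)}\Psi_{B(k)}$ and the identity $B(h)B(k)=B\bigl(h\Psi_{B(h)}(k)\bigr)$, to an equality of the form $\Psi_{B(h)B(k)}(l)=\Psi_{B(h)B(k)}(l)$.
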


From  Theorem \ref{th RRBO to post-group} and Theorem \ref{th braces <-> post-groups} (see, also \cite[Proposition 3.5]{RS-23}) it follows that if we define a new operation $\circ_B \colon H \to H$,
$$
h\circ_B k = h\Psi_{B(h)}(k),~~~h, k \in H,
$$
using a relative Rota--Baxter operator $B\colon H\rightarrow G$ with respect to $(G, \Psi)$, then $(H, \cdot, \circ_B)$
is a skew left brace.
 The following example compares construction of RB skew left braces and RRB skew left braces.

\begin{example} \label{SLB}
Let $H = \Z_4$ be a cyclic group of order $4$. Then, by Lemma \ref{AbSB}(1) there are following RB-operators on  $\Z_4$:

1) $B_0(h) =  0$ for any $h \in H$;

2) $B_{-1}$, which acts by the rules $0 \mapsto 0$,  $1 \mapsto 3$, $2 \mapsto 2$, $3 \mapsto 1$;

3) $B_{2}$, which acts by the rules $0 \mapsto 0$,  $1 \mapsto 2$, $2 \mapsto 0$, $3 \mapsto 2$.

By Lemma \ref{AbSB}(2), on $\Z_4$ there exists only trivial RB skew left brace.

Now, let us construct relative Rota--Baxter skew left braces on $H = \Z_4$ with respect to $(G = \Z_2\times \Z_2, \Psi)$.
Note that $\Aut \Z_4 = \set{\epsilon, -\epsilon},$ where $\epsilon=\id$ and $-\epsilon(g)=-g$ for any $g\in \Z_4.$
Let $\Psi \colon \Z_2\times \Z_2 \to \Z_4$ be defined by the following way:
$$
\Psi(0,0) =\epsilon;\; \Psi(1,0)=-\epsilon;\; \Psi(0,1)=\epsilon;\; \Psi(1,1)=-\epsilon.
$$

Define $B\colon \Z_4\rightarrow \Z_2\times \Z_2$ as follows:
$$
B(0)=(0,0),\; B(1) = (1,0),\; B(2) = (0,1),\; B(3) = (1,1).
$$
One can check that $B$ is a relative Rota--Baxter operator with respect to $(G, \Psi)$ and 
by applying Theorems \ref{th RRBO to post-group} and \ref{th braces <-> post-groups} to the operator $B,$ we get a skew left brace $(\Z_4,+,\circ_B),$ where $(\Z_4,+)\cong \Z_4$ and $(\Z_4,\circ_B)\cong \Z_2\times \Z_2.$

 Hence, using RRB-operators we can construct more skew braces, than using only RB-operators.
\end{example}

It is interesting to generalize this example, by taking  $H = \Z_{p^2}$, $G = \Z_p\times \Z_p$, where $p$ is a prime number.
The following theorem shows that for $p>2$ the set of skew left braces which can be defined on $H$ using RRB-operators of the form $B\colon \Z_{p^2}\rightarrow \Z_p\times \Z_p$ is the same as using RB-operators  $B' \colon \Z_{p^2} \to \Z_{p^2}$.

\begin{theorem} \label{Zp}
Let $p$ be a prime number and $\Psi \colon \Z_p\times \Z_p \to \Aut \Z_{p^2}$ be a group homomorphism. For any $b_1,b_p\in \Z_p\times \Z_p,$ there is no more than one relative Rota-Baxter operator $B\colon \Z_{p^2}\rightarrow \Z_p\times \Z_p$ such that $B(1)=b_1$ and $B(p)=b_p.$

Moreover, if $p>2,$ then any relative Rota-Baxter operator $B\colon \Z_{p^2}\rightarrow \Z_p\times \Z_p$ is a homomorphism.
\end{theorem}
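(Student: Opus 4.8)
The plan is to exploit the rigidity of the action $\Psi$, convert the defining identity into a one-step recursion that reconstructs $B$ from two values, and then force additivity by a binomial computation that only succeeds for odd $p$. First I would fix the shape of $\Psi$: since $\Aut\Z_{p^2}\cong(\Z/p^2\Z)^\times$ is cyclic of order $p(p-1)$, whereas $\Z_p\times\Z_p$ has exponent $p$, the image of $\Psi$ must lie in the unique subgroup of order $p$, namely $\{x\mapsto(1+pk)x:k\in\Z_p\}$. Hence $\Psi_g(x)=(1+p\,m(g))x$ for a homomorphism $m\colon\Z_p\times\Z_p\to\Z_p$. Writing $\Z_{p^2}$ additively and setting $\mu(h):=m(B(h))\in\Z_p$, the relative Rota--Baxter identity becomes $B(h)+B(k)=B\bigl(h+k+p\,\mu(h)k\bigr)$ for all $h,k$.

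Next I would extract two relations. Because $\Psi_{B(h)}$ fixes the subgroup $p\Z_{p^2}$ pointwise (as $(1+p\mu(h))\cdot pj\equiv pj \pmod{p^2}$), restricting the identity to $k\in p\Z_{p^2}$ shows that $B$ is additive on $p\Z_{p^2}$, so $B(pj)=jB(p)$, and more generally yields the coset relation $B(h+pj)=B(h)+jB(p)$. Substituting $k=1$ and applying the coset relation to $n+1+p\mu(n)$ then gives the recursion $B(n+1)=B(n)+B(1)-\mu(n)B(p)$, with $B(0)=0$. Since $\mu(n)=m(B(n))$ depends on $B(n)$ alone, an immediate induction shows that $B(1)$ and $B(p)$ determine $B(n)$ for every $n$; this is exactly the uniqueness assertion.

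For the second claim I would iterate the recursion to get $B(n)=nB(1)-S(n)B(p)$ with $S(n)=\sum_{i<n}\mu(i)$, and then apply $m$ to obtain the scalar recursion $\mu(n+1)=(1-\gamma)\mu(n)+\nu$, where $\gamma=m(B(p))$ and $\nu=m(B(1))$. Iterating this $p$ times and using the Frobenius identity $r^p=r$ in $\Z_p$ (with $r=1-\gamma$, together with $\sum_{i<p}r^i=1$ when $r\neq 1$) gives $\mu(n+p)=r\mu(n)+\nu$, while the coset relation independently gives $\mu(n+p)=\mu(n)+\gamma$. Comparing the two forces $(r-1)\mu(n)=\gamma-\nu$ for all $n$; evaluating at $n=0$ gives $\nu=\gamma$, and then at $n=1$ gives $-\gamma^2=0$, so $\gamma=0$. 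With $\gamma=0$ the scalar recursion collapses to $\mu(n)=n\nu$, whence $S(p)=\nu\,\tfrac{p(p-1)}{2}=\nu\binom{p}{2}$.

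Finally, for odd $p$ the integer $\tfrac{p(p-1)}{2}$ is divisible by $p$, so $S(p)=0$; evaluating $B(n)=nB(1)-S(n)B(p)$ at $n=p$ then gives $B(p)=-S(p)B(p)=0$, and therefore $B(n)=nB(1)$ is a group homomorphism. I expect the step forcing $\gamma=0$ to be the main obstacle, since it is invisible to either the single-generator recursion or the coset relation taken alone and emerges only from confronting them. The role of the hypothesis $p>2$ is pinpointed by the last divisibility: one has $\binom{p}{2}\equiv 0\pmod p$ for odd $p$ but $\binom{2}{2}=1$, which is precisely the loophole used by the operator $B\colon\Z_4\to\Z_2\times\Z_2$ of Example \ref{SLB} and the reason $p>2$ cannot be weakened.
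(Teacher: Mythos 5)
Your proposal is correct, and while it shares the paper's skeleton (identifying the image of $\Psi$ with the order-$p$ subgroup $\{x\mapsto(1+pk)x\}$ of $\Aut\Z_{p^2}$, proving additivity of $B$ on $p\Z_{p^2}$, arriving at $B(n)=nB(1)-S(n)B(p)$, and ultimately resting on $\binom{p}{2}\equiv 0 \pmod p$ for odd $p$), the engine is genuinely different. The paper proves by induction the closed-form identity $nB(x)=B\bigl(\tfrac{(1+t(x))^n-1}{t(x)}x\bigr)$ and then, at $n=p$, observes that the correction term $\sum_{i\ge 2}C_p^i t(x)^{i-1}$ is divisible by $p^2$, which directly yields $B(px)=pB(x)=0$ for all $x$ with no need to control $m(B(p))$ separately. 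You instead linearize: you pass to the scalar sequence $\mu(n)=m(B(n))$, obtain the first-order recursion $\mu(n+1)=(1-\gamma)\mu(n)+\nu$, and extract $\gamma=m(B(p))=0$ by confronting the $p$-fold iterate (via Fermat's little theorem and the geometric sum $\sum_{i<p}r^i=1$ for $r\ne 1$) with the coset relation $\mu(n+p)=\mu(n)+\gamma$; only then does $S(p)=\nu\binom{p}{2}$ become computable and the divisibility kick in. Your route costs one extra lemma ($\gamma=0$) that the paper's direct computation makes unnecessary, but it buys a cleaner uniqueness argument (the one-step recursion $B(n+1)=B(n)+B(1)-\mu(n)B(p)$ is more transparent than the paper's appeal to $s(1,n)$ being determined by $B(1)$ and $\Psi$) and it isolates exactly where $p>2$ enters, consistently with the $p=2$ counterexample where $B(2)=-B(2)$ is vacuous. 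All the individual steps I checked (the coset relation $B(h+pj)=B(h)+jB(p)$, the derivation $-\gamma^2=0$ from $\mu(0)=0$ and $\mu(1)=\nu$, and the final evaluation $B(p)=-S(p)B(p)=0$) are sound.
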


\begin{proof}
Since $\Aut \Z_{p^2}$ is a group of order $p(p-1),$ then $\Aut \Z_{p^2} \simeq Z_p \times A$, where $A$ is an abelian group of order $p - 1$. Let us consider an automorphism $\chi_k$ of a group $\Z_{p^2}$ defined as 
\[\chi_k \colon 1 \mapsto kp + 1\]
then $\chi_k$ is an element of order $p$ i.e.
\[(kp + 1)^p = (kp)^p + C^1_p(kp)^{p-1} + \ldots + C^{p-2}_p(kp)^2 + C^{p-1}_pkp + 1,~\mbox{where}~C_m^l=\frac{m!}{l!(m-l)!}. \]
All elements of the sum above obviously divided by $p^2$ except $1$.
Thus $\chi_k$  is an element of order $p$. And there is only $p$ such elements (we can take $k = 0, 1, \ldots, p-1$).

 It follows that any action $\Psi\colon \Z_p \times \Z_p \rightarrow \Aut\Z_{p^2}$ has the form
$$\Psi_{(n_1,n_2)}x=\bigl(p(k_1n_1+k_2n_2)+1\bigr)x,$$
where the numbers $k_1,k_2\in\{0,1,\ldots,p-1\}$ define the action.

Now fix the action $\Psi$ and suppose that $B\colon \Z_{p^2}\rightarrow \Z_p\times \Z_p$ is a relative Rota--Baxter operator,
where  $B(x) = (x_1, x_2)$ for some $x_1, x_2 \in \Z_p$. Let  $\cdot$ denote the scalar multiplication of vectors  from $\Z_p^2$, i.e.  $(a_1, a_2)\cdot(b_1, b_2) = a_1b_1 + a_2b_2$. Also, define a function $t\colon \Z_{p^2}\rightarrow p\Z_{p^2}$ as 
$$
t(x)=pB(x) \cdot (k_1, k_2) = p(k_1x_1+k_2x_2).
$$
Note that $\Psi_{B(x)}y=\bigl(t(x)+1\bigr)y$ and since $B$ is a relative Rota--Baxter operator, we have 
$$
B(x)+B(y)=B\bigl(x+\Psi_B(x)y\bigr)=B\bigl(x+y+t(x)y).
$$
Using the fact  that  $t(x)$ is divisible by $p,$ we can write 
$$B(x)+B(py)=B(x+py+pt(x)y)=B(x+py).$$ It follows that the restriction $B|_{p\Z_{p^2}}$ is a homomorphism.

We will now prove by induction over $n$ that 
$$nB(x)=B\biggl(\frac{\bigl(t(x)+1\bigr)^n-1}{t(x)}x\biggr).$$
Indeed, for $n=1$ we have $B(x)=B(x),$ and if the statement holds for $n-1,$ then 
\begin{gather*}
B(x)+(n-1)B\bigl(x\bigr)=B\biggl(x+(t(x)+1)\frac{\bigl(t(x)+1\bigr)^{n-1}-1}{t(x)}x\biggr) = \\
= B\biggl(x+\frac{\bigl(t(x)+1\bigr)^n-t(x)-1}{t(x)}x\biggr) = B\biggl(x+\frac{\bigl(t(x)+1\bigr)^n-t(x)-1}{t(x)}x\biggr) = \\ = B\biggl(\frac{\bigl(t(x)+1\bigr)^n-1}{t(x)}x\biggr).
\end{gather*}

Now note that 
$$
\frac{\bigl(1+t(x)\bigr)^n-1}{t(x)}=n+\sum\limits_{i=2}^n C_n^it(x)^{i-1}.
$$
 We now have 
$$
nB(x)=B(nx+ps(x,n)x)=B(nx)+B(ps(x,n)x).
$$
 Note that $B|_{p\Z_{p^2}}$ is a homomorphism of abelian groups, and we can by extension treat it as a homomorphism of $\Z_{p^2}$-modules. We can thus write 
$$
B(ps(x,n)x)=s(x,n)xB(p)~\mbox{and}~B(nx)=nB(x)-s(x,n)xB(p).
$$
By substituting $x=1,$ we obtain
\begin{equation}\label{eq RRBO on Zp^2}
B(n)=nB(1)-s(1,n)B(p).
\end{equation}

Note that $s(1,n)$ can be calculated knowing only $B(1)$ and the action $\Psi,$ so we have proven that for any given action, the values of $B(1)$ and $B(p)$ define a unique relative Rota--Baxter operator.

Now let $p>2.$ Note that $C_p^2=\frac{p!}{2!(p-2)!}$ is divisible by $p.$ Since $t(x)$ is also divisible by $p,$ it follows that $\sum\limits_{i=2}^p C_p^it(x)^{i-1}$ is divisible by $p^2.$ On one hand, $pB(x)=0,$ and on the other hand,
$$pB(x)=B\Bigl(px+\sum\limits_{i=2}^p C_p^it(x)^{i-1}\Bigr)=B(px).$$
It follows that $B(px)=0$ for any $x,$ so $B|_{p\Z_{p^2}}$ is a zero homomorphism, and in turn,
$$nB(1)=B(n)-s(1,n)B(p)=B(n),$$
which means that $B$ is a homomorphism.
\end{proof}

\begin{question}
 Let us define a Rota--Baxter operator (RB-operator) on a skew left brace as a map which is a Rota--Baxter operator on both groups of skew left brace. Find Rota--Baxter operators on skew left braces. If we are considering RB-operators on a left brace $(G, +, \circ)$, then on the group $(G, +)$ it is an endomorphism.
 \end{question}

\bigskip


\section{Skew left braces, nilpotent groups and the YBE} \label{YBE}

\subsection{$\lambda$-homomorphic skew left braces}

Consider a particular type of skew left braces, which was introduced in \cite{lambda-homomorphic braces}.
A skew left brace $(G,\cdot,\circ)$ is called $\lambda$-\textit{homomorphic}, if $\lambda\colon (G,\cdot) \rightarrow \Aut(G,\cdot)$ is a group homomorphism.
The main idea for the introduction of $\lambda$-homomorphic skew left braces is the following. If we take a group $G$ with a generating set $A$, and define a map 
$\lambda \colon A \to \Aut(G)$, then we can extend it on all elements of $G$. Under some conditions this map $\lambda  \colon G \to \Aut(G)$ is a  $\lambda$-map of a skew left brace $(G, \cdot, \circ)$, where  the second operation is defined by the rule
$$
a \circ b = a \cdot \lambda_a(b),~~a, b \in G. 
$$

Class of $\lambda$-homomorphic skew left braces is not a big class, but it  has a good description. More precisely, any  $\lambda$-homomorphic skew left brace is metatrivial that means that it is an extension of one trivial skew left brace by another trivial skew left brace (see \cite{lambda-homomorphic braces}).

We introduce the following definition.

\begin{definition}
A post-group  $(G,\cdot,\triangleright)$ is said to be a \textit{homomorphic post-group} if it satisfies the identity
 $$(a\cdot b)\triangleright c = (a\triangleright c)\cdot(b\triangleright c)$$
for all $a, b, c \in G$. 
\end{definition}

Note that the condition $(a\cdot b)\triangleright c = (a\triangleright c)\cdot(b\triangleright c)$ is the  right distributivity.

By applying Theorem \ref{th braces <-> post-groups} to $\lambda$-homomorphic skew left braces we  obtain the following result.

\begin{proposition} \label{LSB-PG}
Let $(G,\cdot,\circ)$ be a $\lambda$-homomorphic skew left brace. Then the post-group $(G,\cdot, \triangleright)$ has the following properties:

1) $(G,\cdot,\triangleright)$ is a homomorphic post-group.

2) $[a,b,c]=a\triangleright c,$ where $[a,b,c]$ is the associator: 
$$[a,b,c]:=\bigl(a\triangleright(b\triangleright c)\bigr)\cdot\bigl((a \triangleright b) \triangleright c\bigr)^{-1}.$$
\end{proposition}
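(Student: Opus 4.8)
The plan is to transfer everything to the level of the $\lambda$-map. By Theorem \ref{th braces <-> post-groups} the post-group operation is $a\triangleright b=\lambda_a(b)$, and since $a\circ b=a\cdot\lambda_a(b)$, both assertions become identities among the automorphisms $\lambda_a\in\Aut(G,\cdot)$. Before anything else I would record two facts. The defining hypothesis, that $(G,\cdot,\circ)$ is $\lambda$-homomorphic, says precisely that $\lambda_{a\cdot b}=\lambda_a\lambda_b$; and the identity $\lambda_{a\circ b}=\lambda_a\lambda_b$, expressing that the $\lambda$-map is a homomorphism of $(G,\circ)$, holds for every skew left brace. Comparing the two and using $a\circ b=a\cdot\lambda_a(b)$ yields the auxiliary simplification $\lambda_{\lambda_a(b)}=\lambda_b$ for all $a,b$, which I expect to be the technical engine of the argument and which reflects the metatriviality of such braces.

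For part 1, unwinding $a\triangleright b=\lambda_a(b)$ shows that the required right distributivity $(a\cdot b)\triangleright c=(a\triangleright c)\cdot(b\triangleright c)$ is literally the identity $\lambda_{a\cdot b}(c)=\lambda_a(c)\cdot\lambda_b(c)$. I would try to obtain this from the homomorphism property $\lambda_{a\cdot b}=\lambda_a\lambda_b$ together with the metatrivial structure recorded above: writing $\lambda_b(c)=z\cdot c$ with $z=\lambda_b(c)\,c^{-1}\in\ker\lambda$ (which follows from $\lambda_{\lambda_a(b)}=\lambda_b$, since then $\lambda_b(c)\,c^{-1}$ lies in $\ker\lambda$), and analysing how $\lambda_a$ acts on the trivial sub-brace $\ker\lambda$ and on the trivial quotient, so as to collapse the composition $\lambda_a\lambda_b$ into the pointwise product $\lambda_a(\cdot)\cdot\lambda_b(\cdot)$.

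For part 2 I would not compute further with $\lambda$ but argue formally. Expanding the associator and substituting the post-group axiom $a\triangleright(b\triangleright c)=\bigl(a\cdot(a\triangleright b)\bigr)\triangleright c$ gives $[a,b,c]=\bigl((a\cdot(a\triangleright b))\triangleright c\bigr)\cdot\bigl((a\triangleright b)\triangleright c\bigr)^{-1}$. Applying the right distributivity of part 1 to the two factors $a$ and $a\triangleright b$ rewrites the first term as $(a\triangleright c)\cdot\bigl((a\triangleright b)\triangleright c\bigr)$, whereupon the trailing factor cancels the inverse and leaves exactly $a\triangleright c$. Thus part 2 is a purely formal consequence of part 1 and the post-group axiom, requiring no new input. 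The main obstacle is therefore concentrated in part 1: bridging the gap between the compositional homomorphism identity $\lambda_{a\cdot b}=\lambda_a\lambda_b$ and its evaluated, distributive form, which is exactly the point at which the triviality of $\ker\lambda$ and of the quotient must be brought to bear.
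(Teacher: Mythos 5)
Your treatment of part 2 is correct and coincides with the paper's own proof: the paper likewise writes $(a\triangleright c)\cdot\bigl((a\triangleright b)\triangleright c\bigr)=\bigl(a\cdot(a\triangleright b)\bigr)\triangleright c=a\triangleright(b\triangleright c)$, using part 1 for the first equality and the post-group axiom for the second, and then rearranges. So, exactly as you say, part 2 is a purely formal consequence of part 1.

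For part 1, however, you have correctly located the crux but not resolved it, and the gap is genuine. The paper's entire proof of part 1 is the chain $(a\cdot b)\triangleright c=\lambda_{a\cdot b}(c)=\lambda_a(c)\cdot\lambda_b(c)=(a\triangleright c)\cdot(b\triangleright c)$; the middle equality --- precisely the passage from the compositional identity $\lambda_{a\cdot b}=\lambda_a\lambda_b$ to its pointwise form that you flag as ``the main obstacle'' --- is simply asserted there without justification. Your plan to derive it from $\lambda_{\lambda_a(b)}=\lambda_b$ and the decomposition $\lambda_b(c)=z\cdot c$ with $z\in\ker\lambda$ cannot succeed: it yields $\lambda_a(\lambda_b(c))=\lambda_a(z)\cdot\lambda_a(c)$, whereas the target is $\lambda_a(c)\cdot z\cdot c$, and these do not agree. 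In fact the identity $(a\cdot b)\triangleright c=(a\triangleright c)\cdot(b\triangleright c)$ is incompatible with $a\triangleright b=\lambda_a(b)$ on any nontrivial group: in every post-group one has $L^{\triangleright}_e=\id$ (as $L^{\triangleright}_e$ is an idempotent automorphism), so putting $b=e$ forces $\lambda_a(c)=\lambda_a(c)\cdot c$, i.e.\ $c=e$. A concrete witness is $G=\Z_4$ with $\lambda_a(c)=(-1)^a c$, which is a $\lambda$-homomorphic skew left brace, yet $\lambda_{1+1}(1)=1$ while $\lambda_1(1)+\lambda_1(1)=2$. So the obstruction you identified is not a missing trick but a defect of the statement (or of the definition of homomorphic post-group) itself; no completion of your argument for part 1 is possible as the proposition stands, and the paper's one-line proof of part 1 is exactly where the error is hidden.
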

\begin{proof}
1) Follows from $$(a\cdot b)\triangleright c = \lambda_{a\cdot b}(c)=\lambda_a(c)\cdot \lambda_b(c) = (a\triangleright c)\cdot(b\triangleright c).$$

2) We have
$$
(a\triangleright c)\cdot \bigl((a \triangleright b) \triangleright c\bigr) = 
\bigl(a\cdot (a\triangleright b)\bigr)\triangleright c = a\triangleright (b\triangleright c),
$$
where the first equality follows from the definition of a homomorphic post-group and the second one follows from the definition of post-group (see Definition \ref{PG}).

\end{proof}

We will now use Theorem \ref{th braces <-> post-groups} to construct a particular class of skew left braces on two-step nilpotent groups.

\begin{proposition}\label{prop 2-step nilpotent post-group}
1) For a group $(G,\cdot)$ let $a\triangleright b = a^{-1}ba.$ Then $(G,\cdot,\triangleright)$ is a post-group.

2) For a two-step nilpotent group $(G,\cdot)$ and $n\in \Z\setminus\{0\}$ let $a\triangleright b = a^{-n}ba^n.$ Then $(G,\cdot,\triangleright)$ is a post-group.
\end{proposition}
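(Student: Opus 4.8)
The plan is to verify the two defining axioms of a post-group (Definition \ref{PG}) directly for each operation, namely that $L^{\triangleright}_a$ is an automorphism of $(G,\cdot)$ and that the identity $a\triangleright(b\triangleright c) = \bigl(a\cdot(a\triangleright b)\bigr)\triangleright c$ holds. For part 1), where $a\triangleright b = a^{-1}ba$, the map $L^{\triangleright}_a$ is just conjugation by $a$, which is the inner automorphism $\Inn(a)^{-1}$ (or $\Inn(a^{-1})$); that it is an automorphism is immediate. For the main axiom I would compute both sides: the left side is $a\triangleright(b\triangleright c) = a^{-1}(b^{-1}cb)a$, while on the right side $a\cdot(a\triangleright b) = a\cdot a^{-1}ba = ba$, so $\bigl(a\cdot(a\triangleright b)\bigr)\triangleright c = (ba)^{-1}c(ba) = a^{-1}b^{-1}cba$. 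These coincide, so the axiom holds with no hypothesis on $G$.

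For part 2), where $a\triangleright b = a^{-n}ba^n$, the same structural computation goes through formally, but now equality of the two sides requires the two-step nilpotency. The map $L^{\triangleright}_a$ is conjugation by $a^n$, hence an automorphism for every $n$, so the automorphism condition is free. The content is in the second axiom. Following the template of part 1), the left side is $a\triangleright(b\triangleright c) = a^{-n}b^{-n}cb^na^n$, and on the right side $a\cdot(a\triangleright b) = a\cdot a^{-n}ba^n$, so $\bigl(a\cdot(a\triangleright b)\bigr)\triangleright c = \bigl(a\cdot a^{-n}ba^n\bigr)^{-n}\, c\, \bigl(a\cdot a^{-n}ba^n\bigr)^{n}$. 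The two sides are conjugates of $c$ by $b^na^n$ and by $\bigl(a^{1-n}ba^n\bigr)^n$ respectively, so the identity reduces to showing that these two conjugating elements differ by a central element, i.e. that $\bigl(a^{1-n}ba^n\bigr)^n \equiv b^n a^n$ modulo the center of $G$.

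The main obstacle, and the only real computation, will be verifying this congruence using two-step nilpotency. The key facts I would use are that in a two-step nilpotent group the commutator subgroup is central, so commutators are bilinear in the sense $[xy,z]=[x,z][y,z]$ and $[x,yz]=[x,y][x,z]$, and that consequently $(xy)^n = x^n y^n [y,x]^{\binom{n}{2}}$. Writing $a^{1-n}ba^n = a\,(a^{-n}ba^n) = a\,b^{a^n}$ where $b^{a^n}=a^{-n}ba^n$ differs from $b$ by a central commutator, I would expand $\bigl(a\,b^{a^n}\bigr)^n$ using the power formula; since all commutators are central, the correction terms $[\,\cdot\,,\,\cdot\,]^{\binom{n}{2}}$ lie in $Z(G)$ and drop out upon conjugating $c$. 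What remains is to match $a^n (b^{a^n})^n$ against $b^n a^n$ up to a central factor, and here one uses that $(b^{a^n})^n = (a^{-n}b^na^n)$ together with centrality of $[b^n,a^n]$ to rearrange $a^n a^{-n} b^n a^n = b^n a^n \cdot [\text{central}]$. I expect the whole verification to collapse cleanly precisely because every commutator that appears is central and therefore invisible to conjugation; the only care needed is bookkeeping of the binomial-commutator exponents, which I would organize via the two-step identity $(xy)^n = x^ny^n[y,x]^{\binom{n}{2}}$ rather than expanding by hand.
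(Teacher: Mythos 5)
Your proposal is correct. Part 1) is verbatim the paper's argument: both sides of the post-group axiom reduce to $a^{-1}b^{-1}cba$, with no hypothesis on $G$. For part 2) you take a slightly different route. The paper rewrites $a\triangleright b=a^{-n}ba^{n}=b[b,a]^{n}$ and expands both sides of the axiom purely in commutator form, using bilinearity of commutators modulo the centre to show that each side equals $c[c,b]^{n}[c,a]^{n}$. You instead keep both sides as conjugates of $c$ and reduce the axiom to the single congruence $\bigl(a^{1-n}ba^{n}\bigr)^{n}\equiv b^{n}a^{n} \pmod{Z(G)}$, which you settle with the power formula $(xy)^{n}=x^{n}y^{n}[y,x]^{\binom{n}{2}}$ and the centrality of $[a^{n},b^{n}]$. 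Both arguments rest on exactly the same fact --- that $G'\leq Z(G)$ makes every commutator invisible under conjugation --- so the difference is one of bookkeeping rather than substance; your reduction to ``conjugators agree mod centre'' avoids tracking the commutator exponents on both sides separately (and incidentally sidesteps a small typo in the paper's expansion of $\bigl[c[c,b]^{n},a\bigr]^{n}$), while the paper's version produces the explicit closed form $a\circ b=ab[b,a]^{n}$ that it reuses in the subsequent propositions.
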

\begin{proof}
Since conjugation by an element is always an automorphism of the group, we only have to show that $a\triangleright (b\triangleright c)=\bigl(a(a\triangleright b)\bigr)\triangleright c.$

1) If $a\triangleright b = a^{-1} b a,$ then 
$$a\triangleright (b\triangleright c) = a^{-1}b^{-1}cba,$$
and
$$\bigl(a(a\triangleright b)\bigr)\triangleright c = (ba)\triangleright c = a^{-1}b^{-1}cba.$$

2) If $G$ is a two-step nilpotent group and $a\triangleright b = a^{-n} b a^n = b[b,a]^n,$ then we have
$$a\triangleright (b\triangleright c) = a\triangleright \bigr(c[c,b]^n\bigl)
= c[c,b]^n\bigl[c[c,b]^n,a\bigr]=c[c,b]^n[c,a]^n\bigl[[c,b],a\bigr]^n=c[c,b]^n[c,a]^n,$$
and
$$\bigl(a(a\triangleright b)\bigr)\triangleright c = \bigl(ab[b,a]^n\bigr)\triangleright c = c\bigl[c,ab[b,a]^n\bigr]^n
= c[c,b]^n[c,a]^n\bigl[c,[b,a]^n\bigr]^n=c[c,b]^n[c,a]^n.$$
\end{proof}

Let $G$ be a two-step nilpotent group and $a\triangleright b=a^{-n}ba^n$ for some integer $n.$ By Proposition \ref{prop 2-step nilpotent post-group}, $(G,\cdot,\triangleright)$ is a post-group. By Theorem \ref{th braces <-> post-groups}, $(G,\cdot,\circ)$ is a skew left brace, where
$$a\circ b = a\cdot (a\triangleright b)=aa^{-n}ba^n=ab[b,a]^n.$$

The following statement holds for the group $(G,\circ)$

\begin{proposition}The group $(G,\circ)$ defined above is two-step nilpotent.
\end{proposition}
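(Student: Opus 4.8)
The plan is to work entirely inside the group $(G,\cdot)$, exploiting that in a two-step nilpotent group every commutator is central and that the commutator map is bi-multiplicative, i.e. $[xy,z]=[x,z][y,z]$ and $[x,yz]=[x,y][x,z]$. First I would record two preliminary facts about the new operation $a\circ b = ab[b,a]^n$. Since $[a,a^{-1}]=e$ and $[e,a]=e$, the element $e$ is the $\circ$-identity and the $\circ$-inverse of $a$ is exactly its $\cdot$-inverse $a^{-1}$; this is what keeps the commutator computation manageable. Moreover, if $z\in Z(G,\cdot)$ then $[z,a]=[a,z]=e$, so $a\circ z = az$ and $z\circ a = za = az$ for every $a$; hence every $\cdot$-central element is $\circ$-central, that is $Z(G,\cdot)\subseteq Z(G,\circ)$.

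Next I would prove, by induction on $k$, the closed formula
$$a_1\circ a_2\circ\cdots\circ a_k = a_1 a_2\cdots a_k\prod_{1\le i<j\le k}[a_j,a_i]^n.$$
In the inductive step one multiplies the length-$(k-1)$ product $P=(a_1\cdots a_{k-1})z$, where $z$ is a central product of commutators, on the right by $a_k$: since $z$ is central, $[a_k,P]=[a_k,a_1\cdots a_{k-1}]=\prod_{i<k}[a_k,a_i]$ by bi-multiplicativity, and pushing the (central) correction factors to the end yields the claimed expression. The value of this formula is that all the commutator corrections are central, so they never interfere with one another.

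I would then apply the formula with $(a_1,a_2,a_3,a_4)=(a^{-1},b^{-1},a,b)$ to evaluate the $\circ$-commutator $[a,b]_\circ = a^{-1}\circ b^{-1}\circ a\circ b$. The straight product $a^{-1}b^{-1}ab$ equals $[a,b]$, while collecting the six factors $[a_j,a_i]^n$ (two of which vanish) and using $[b^{-1},a^{-1}]=[b,a]$, $[b,a^{-1}]=[b,a]^{-1}$, $[a,b^{-1}]=[b,a]$ reduces the correction to $[b,a]^{2n}=[a,b]^{-2n}$. Hence $[a,b]_\circ = [a,b]^{1-2n}$, which is a power of a $\cdot$-commutator and therefore lies in $Z(G,\cdot)$.

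Combining the two steps finishes the argument: every generator $[a,b]_\circ$ of the $\circ$-commutator subgroup lies in $Z(G,\cdot)\subseteq Z(G,\circ)$, so $[G,G]_\circ\subseteq Z(G,\circ)$, which is precisely the assertion that $(G,\circ)$ has nilpotency class at most two. The only delicate point, and the main place to be careful, is the exponent bookkeeping in the four-fold product: one must track the signs of the commutator contributions correctly to arrive at $[a,b]^{1-2n}$. Since the outcome is in any case a power of $[a,b]$, the precise exponent is irrelevant to the conclusion, so even a small slip there does not threaten the result.
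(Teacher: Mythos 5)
Your proof is correct and follows essentially the same route as the paper: both arguments reduce to the explicit computation $[a,b]_\circ=[a,b]^{1-2n}$ (the paper writes this as $[b,a]^{2n-1}$) and then observe that such elements are central in $(G,\circ)$. Your closed formula for iterated $\circ$-products and the inclusion $Z(G,\cdot)\subseteq Z(G,\circ)$ are just a slightly more systematic packaging of the paper's direct expansion of $a^{-1}\circ b^{-1}\circ a\circ b$ and its direct verification that $\bigl[[a,b]_\circ,c\bigr]_\circ=e$.
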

\begin{proof}
Note that the inverse element with respect to the operation $\circ$ is the same element as the inverse with respect to the operation $\cdot.$ Indeed, $a^{-1}\circ a = a^{-1}a[a,a^{-1}]^n=e.$
Denote by $[a,b]_\circ$ the commutator with respect to the operation $\circ$:
\begin{gather*}
[a,b]_\circ = a^{-1}\circ b^{-1}\circ a\circ b
= \bigl(a^{-1}b^{-1}[b^{-1},a^{-1}]^n\bigr)\circ\bigl(ab[b,a]^n\bigr)
= \bigl(a^{-1}b^{-1}[b,a]^n\bigr)\circ\bigl(ab[b,a]^n\bigr) = \\
= a^{-1}b^{-1}[b,a]^nab[b,a]^n\bigl[ab[b,a]^n,a^{-1}b^{-1}[b,a]^n\bigr]^n
= [a,b][b,a]^{2n}\bigl[ba[a,b],(ba)^{-1}\bigr]=[b,a]^{2n-1}.
\end{gather*}
Now we can see that $\bigl[[a,b]_\circ,c\bigr]_\circ=\bigl[c,[b,a]^{2n-1}\bigr]^{2n-1}=\bigl[c,[b,a]\bigr]^{(2n-1)^2}=e$ for any $a,b,c\in G,$ which means that the group $(G,\circ)$ is two-step nilpotent. 
\end{proof}

It is easy to show that $(G,\circ)$ is not necessarily isomorphic to $(G,\cdot).$ Indeed, if $(G,\cdot)$ satisfies the relation $[a,b]^{2n-1}=e$ for any $a$ and $b,$ then $[a,b]_\circ=[a,b]^{1-2n}=e,$ hence the group $(G,\circ)$ has to be abelian. With $n$ not equal to $0$ or $1,$ groups that satisfy the relation $[a,b]^{2n-1}=e$ do not have to be abelian.

\begin{proposition} \label{SLB-LSLB}
A skew left brace constructed above is a $\lambda$-homomorphic skew left brace.
\end{proposition}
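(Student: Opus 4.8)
The plan is to verify directly that the $\lambda$-map of the brace $(G,\cdot,\circ)$ constructed above is a group homomorphism out of $(G,\cdot)$, since this is precisely the defining property of a $\lambda$-homomorphic skew left brace. Recall that for any skew left brace the map $\lambda$ is automatically a homomorphism from $(G,\circ)$ into $\Aut(G,\cdot)$; the substantive point here is to establish the extra identity $\lambda_{a\cdot b}=\lambda_a\lambda_b$, where the right-hand side is composition of automorphisms, for all $a,b\in G$.

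First I would compute $\lambda_a$ explicitly. By definition $\lambda_a(c)=a^{-1}\cdot(a\circ c)$, and since $a\circ c=a\cdot(a\triangleright c)=ac[c,a]^n$, this gives
$$\lambda_a(c)=c[c,a]^n=a^{-n}ca^n.$$
Thus $\lambda_a$ is nothing but conjugation by $a^n$; in particular each $\lambda_a$ is an inner automorphism of $(G,\cdot)$, as it must be.

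Next I would evaluate both sides of $\lambda_{a\cdot b}=\lambda_a\lambda_b$ on an arbitrary $c\in G$, using only that $(G,\cdot)$ is two-step nilpotent, i.e.\ that all commutators are central. Bilinearity of the commutator modulo the centre gives $[c,ab]=[c,a][c,b]$, hence $[c,ab]^n=[c,a]^n[c,b]^n$, so that
$$\lambda_{a\cdot b}(c)=c[c,ab]^n=c[c,a]^n[c,b]^n.$$
On the other side, since $[c,b]^n$ is central we have $[\,c[c,b]^n,a\,]=[c,a]$, whence
$$\lambda_a\bigl(\lambda_b(c)\bigr)=\lambda_a\bigl(c[c,b]^n\bigr)=c[c,b]^n[c,a]^n=c[c,a]^n[c,b]^n,$$
the last equality using that central commutators commute with one another. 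The two expressions coincide, which proves that $\lambda$ is a homomorphism from $(G,\cdot)$ and hence that the brace is $\lambda$-homomorphic.

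I do not expect a genuine obstacle here: the whole argument is a short commutator computation in a two-step nilpotent group. The only points that require care are the composition convention (so that $\lambda_a\lambda_b$ is read as $c\mapsto\lambda_a(\lambda_b(c))$ rather than as a pointwise product), and the repeated appeals to centrality, both that commutators are fixed under conjugation and that $x\mapsto x^n$ is multiplicative on the abelian commutator subgroup. A slightly more conceptual variant of the same proof is to note that, since $\lambda_a$ is conjugation by $a^n$, it suffices to check that $(ab)^n$ and $a^nb^n$ induce the same inner automorphism; this is immediate from $(ab)^n\equiv a^nb^n\equiv b^na^n$ modulo the centre, which holds in any two-step nilpotent group.
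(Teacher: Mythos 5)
Your proof is correct and follows essentially the same route as the paper: both compute $\lambda_a(c)=a^{-n}ca^n$ explicitly and verify $\lambda_{a\cdot b}=\lambda_a\lambda_b$ by a direct commutator calculation relying on the centrality of $[G,G]$. The only cosmetic difference is that you use bilinearity of $[c,-]$ modulo the centre where the paper expands $(ab)^n=a^nb^n[a,b]^{n(n-1)/2}$ and cancels the central factor.
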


\begin{proof}
We have to prove that the  $\lambda$-map which corresponds to skew left brace $(G, \cdot, \circ)$ is a homomorphism $\lambda \colon (G, \cdot) \to \Aut(G, \cdot)$.
By the formula after Proposition \ref{prop 2-step nilpotent post-group}, the new product is
$$
a\circ b =aa^{-n}ba^n. 
$$
Hence, $\lambda_a(b) = a^{-1} \cdot (a\circ b) =a^{-n}ba^n$ and we have
$$
\lambda_a (\lambda_b (c)) = \lambda_a (b^{-n} c b^n) = a^{-n}b^{-n} c b^{n} a^n.
$$
On the other side,
$$
\lambda_{ab} (c) = (ab)^{-n}  c (ab)^n = a^{-n} b^{-n} [a, b]^{-n(n-1)/2} c b^{n} a^{n} [a, b]^{n(n-1)/2} = a^{-n} b^{-n}  c b^{n} a^{n}. 
$$
Comparing with the previous formula, we see that $\lambda_a \lambda_b = \lambda_{ab}$ for any $a, b \in G$. It means that $\lambda$ is a homomorphism.
\end{proof}

\subsection{Verbal solutions of the Yang-Baxter equation}

Let $X$ be a nonempty set and $S\colon X^2\rightarrow X^2.$ 
The map $S$ is called a \textit{solution of the Yang--Baxter equation} on $X,$ if 
$$ S_1\,S_2\,S_1=S_2\,S_1\,S_2, $$
where
$S_1=S\times \Id,$ $S_2=\Id \times S.$

The following theorem allows us to use skew left braces in order to obtain solutions of the Yang--Baxter equation.

\begin{theorem}[\textbf{\cite{Vendramin}}]\label{Skew brace to braid equation}
Let $(G,\cdot,\circ)$ be a skew left brace. Then the map $S\colon G^2\rightarrow G^2,$ defined as
$$S(a,b)=\Bigl( \lambda_a(b), \overline{\lambda_a(b)}\circ a\circ b \Bigr),$$
where $\overline{x}$ is the inverse of $x$ with respect to the operation $\circ,$ is a solution of the Yang-Baxter equation on the set $G.$
\end{theorem}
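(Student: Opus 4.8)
The plan is to verify the braid relation $S_1S_2S_1 = S_2S_1S_2$ by evaluating both sides on an arbitrary triple $(x,y,z)\in G^3$ and comparing the three coordinates. Writing $S(a,b) = (\sigma_a(b),\tau_b(a))$ with $\sigma_a(b) = \lambda_a(b)$ and $\tau_b(a) = \overline{\lambda_a(b)}\circ a\circ b$, a routine expansion of $S_1 = S\times\Id$ (acting on the first two slots) and $S_2 = \Id\times S$ (acting on the last two) shows that equality of the two triples is equivalent to the three coordinate identities
\begin{gather*}
\sigma_{\sigma_x(y)}\sigma_{\tau_y(x)}(z) = \sigma_x\sigma_y(z), \\
\tau_{\sigma_{\tau_y(x)}(z)}(\sigma_x(y)) = \sigma_{\tau_{\sigma_y(z)}(x)}(\tau_z(y)), \\
\tau_z(\tau_y(x)) = \tau_{\tau_z(y)}(\tau_{\sigma_y(z)}(x)),
\end{gather*}
required to hold for all $x,y,z$. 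So the real content is these three identities.

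First I would collect two structural facts about $\lambda$. Each $\lambda_a$ is an automorphism of $(G,\cdot)$ (this is exactly the left-distributivity recorded in the post-group description via Theorem \ref{th braces <-> post-groups}), and a short computation from $\lambda_a(b) = a^{-1}\cdot(a\circ b)$ and the skew-brace axiom shows that $\lambda\colon (G,\circ)\to\Aut(G,\cdot)$ is a group homomorphism, i.e. $\lambda_{a\circ b} = \lambda_a\lambda_b$. The second key fact is the $\circ$-preservation built into the definition of $S$: since $\tau_b(a) = \overline{\lambda_a(b)}\circ a\circ b$, we have $\sigma_a(b)\circ\tau_b(a) = a\circ b$, so $S$ leaves the $\circ$-product of its two arguments unchanged.

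With these in hand the first identity is immediate, since $\sigma_{\sigma_x(y)}\sigma_{\tau_y(x)} = \lambda_{\sigma_x(y)\circ\tau_y(x)} = \lambda_{x\circ y} = \lambda_x\lambda_y = \sigma_x\sigma_y$, using the homomorphism property and then $\circ$-preservation. For the third identity I would first derive the twisted distributivity $\lambda_a(b\circ c) = \lambda_a(b)\circ\lambda_{\tau_b(a)}(c)$ (combining $\lambda_a\in\Aut(G,\cdot)$ with the first identity), deduce from it that $\tau$ is a right action, $\tau_z\tau_y = \tau_{y\circ z}$, and then observe that both sides of the third identity collapse to $\tau_{y\circ z}(x)$ (the right side because $\sigma_y(z)\circ\tau_z(y) = y\circ z$ by $\circ$-preservation).

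The main obstacle is the middle (mixed) identity, which genuinely intertwines $\sigma$ and $\tau$: its left side carries an outer $\tau$ while its right side carries an outer $\sigma=\lambda$. My plan for it is to exploit that the factorization $a\circ b = \sigma_a(b)\circ\tau_b(a)$ comes from an honest associative group $(G,\circ)$; rewriting both sides by means of the twisted distributivity $\lambda_a(b\circ c) = \lambda_a(b)\circ\lambda_{\tau_b(a)}(c)$, the homomorphism property of $\lambda$, and the associativity of $\circ$ should reduce both sides to a common expression. This bookkeeping is where the care is concentrated; everything else follows mechanically from the two structural lemmas about $\lambda$ together with $\circ$-preservation.
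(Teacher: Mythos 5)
The paper does not actually prove this statement: Theorem \ref{Skew brace to braid equation} is imported from \cite{Vendramin} without proof, so there is nothing internal to compare against. Your outline is correct and is essentially the standard argument from that reference. The reduction of $S_1S_2S_1=S_2S_1S_2$ to the three coordinate identities is right (with $S_1=S\times\Id$, $S_2=\Id\times S$), and the two structural inputs you isolate do the whole job: $\lambda_{a\circ b}=\lambda_a\lambda_b$ (which indeed follows from $\lambda_a(b)=a^{-1}\cdot(a\circ b)$, the skew-brace axiom, and the identity $(a\circ b^{-1})\cdot a^{-1}=a\cdot(a\circ b)^{-1}$ obtained by expanding $a\circ(b^{-1}\cdot b)$) and the $\circ$-preservation $\sigma_a(b)\circ\tau_b(a)=a\circ b$. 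Your twisted distributivity $\lambda_a(b\circ c)=\lambda_a(b)\circ\lambda_{\tau_b(a)}(c)$ does follow from the first coordinate identity exactly as you say, it gives the right-action property of $\tau$ and hence the third identity, and the mixed identity also falls out of it: applying it with $b=\lambda_y(z)$ and $c=\tau_z(y)$ shows both sides equal $\overline{\lambda_x(\lambda_y(z))}\circ\lambda_x(y\circ z)$, so the "bookkeeping" you defer genuinely closes. No gaps.
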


In the  previous section we constructed some skew left braces on nilpotent groups. 
We will now proceed to use Theorem \ref{th braces <-> post-groups}, and \ref{Skew brace to braid equation} to construct  solutions to the Yang--Baxter equation on two-step nilpotent groups.
We will now explore verbal solutions of the Yang--Baxter equation on two-step nilpotent groups.

\begin{definition}
For a group $G,$ a map $\phi\colon G^n \rightarrow G$ is called a \emph{verbal map} if there is a group word $w = w(x_1,\ldots,x_n)$ on $n$ letters such that for any $g_1,\ldots,g_n\in G$ we have $\phi(g_1,\ldots,g_n) = w(g_1,\ldots,g_n)$.

For any group word $w$ we will denote the verbal map obtained in this way by $\phi_w.$
\end{definition}

\begin{definition}
Let $G$ be a group. A solution $S$ of the Yang--Baxter equation on $G$ is called a \textit{verbal solution} if there are group words $w_1$ and $w_2$ such that $S=\phi_{w_1}\times \phi_{w_2}.$
\end{definition}

Note that in a two-step nilpotent group $G$ any verbal map $\phi$ has a nice standard form $\phi(x,y)=x^ay^b[y,x]^m,$ and that even though it needs not be a group homomorphism, it has a well-defined abelianization $\phi^{Ab}(x,y)=x^ay^b,$ and the following diagram commutes:
$$
\xymatrix{
G^2 \ar^\phi[r] \ar[d] & G \ar[d] \\
(G^{Ab})^2 \ar^{\phi^{Ab}}[r] & G^{Ab}
}
$$

The following proposition is immediate from this:
\begin{proposition}
If $S=\phi_{w_1}\times \phi_{w_2}$ is a verbal solution of the Yang--Baxter equation on a two-step nilpotent group $G,$ then $S^{Ab}=\phi_{w_1}^{Ab}\times \phi_{w_2}^{Ab}$ is a verbal solution of the Yang--Baxter equation on $G^{Ab}.$
\end{proposition}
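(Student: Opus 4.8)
The plan is to exploit the naturality of verbal maps with respect to the abelianization homomorphism $\pi \colon G \to G^{Ab}$, which is precisely the content of the commuting square stated just before the proposition. The first step is to record the elementary fact that for any group word $w$ and any group homomorphism $f \colon G \to H$ one has $f\bigl(w(g_1,\dots,g_n)\bigr) = w\bigl(f(g_1),\dots,f(g_n)\bigr)$. Applying this with $f = \pi$, and using that in the abelian group $G^{Ab}$ the commutator part of $w$ dies so that $w$ reduces to its abelianization, gives $\pi \circ \phi_w = \phi_w^{Ab} \circ (\pi \times \pi)$, which is exactly the displayed diagram. Writing $\bar\pi = \pi \times \pi \colon G^2 \to (G^{Ab})^2$ and applying this to both coordinate words $w_1$ and $w_2$ yields the intertwining relation
$$ \bar\pi \circ S = S^{Ab} \circ \bar\pi, $$
where $S^{Ab} = \phi_{w_1}^{Ab} \times \phi_{w_2}^{Ab}$.

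Next I would lift this relation to triples. Set $\Pi = \pi \times \pi \times \pi \colon G^3 \to (G^{Ab})^3$, and let $S_1^{Ab} = S^{Ab} \times \Id$ and $S_2^{Ab} = \Id \times S^{Ab}$ be built from $S^{Ab}$ exactly as $S_1, S_2$ are built from $S$. Since $\Pi$ respects the product decompositions $G^3 = G^2 \times G = G \times G^2$ and identity maps commute with $\pi$, the relation above promotes to
$$ \Pi \circ S_1 = S_1^{Ab} \circ \Pi, \qquad \Pi \circ S_2 = S_2^{Ab} \circ \Pi. $$

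Finally, since $S$ solves the Yang--Baxter equation on $G$, we have $S_1 S_2 S_1 = S_2 S_1 S_2$. Composing with $\Pi$ on the left and pushing it through via the two intertwining relations, I would obtain
$$ S_1^{Ab} S_2^{Ab} S_1^{Ab} \circ \Pi = S_2^{Ab} S_1^{Ab} S_2^{Ab} \circ \Pi. $$
As $\pi$ is surjective, so is $\Pi$, and hence the two operators on $(G^{Ab})^3$ must already coincide; cancelling $\Pi$ gives $S_1^{Ab} S_2^{Ab} S_1^{Ab} = S_2^{Ab} S_1^{Ab} S_2^{Ab}$, the Yang--Baxter equation for $S^{Ab}$. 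Because $S^{Ab} = \phi_{w_1}^{Ab} \times \phi_{w_2}^{Ab}$ is by construction a product of verbal maps on $G^{Ab}$, it is a verbal solution, as claimed.

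I do not anticipate a genuine obstacle, as the argument is purely formal once the naturality square is available. The only points deserving care are the justification that $\phi_w^{Ab}$ is genuinely the verbal map of the abelianized word (ensuring both that the square commutes and that $S^{Ab}$ is verbal), and the elementary remark that surjectivity of $\Pi$ legitimizes cancelling it on the right.
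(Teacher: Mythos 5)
Your argument is correct and is exactly the reasoning the paper has in mind: it declares the proposition ``immediate'' from the commuting square $\pi\circ\phi_w=\phi_w^{Ab}\circ(\pi\times\pi)$, and your write-up simply makes explicit the lift to triples and the cancellation of the surjective map $\Pi$. No gap; same approach, just spelled out.
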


Verbal maps $(G^{Ab})^2\rightarrow (G^{Ab})^2$ can be represented as matrices with integer coefficients, and for $2\times 2$ matrices we can fully describe which of them satisfy the Yang--Baxter equation:

\begin{theorem}\label{th YBE over integral domain}
Let $M$ be a $2\times 2$ matrix with coefficients in an integral domain $R$. The map from $R^2$ to $R^2$ defined by left multiplication by $M$ is a solution of the Yang-Baxter equation on $R$ if and only if $M$ has at least one of the following forms:

$$
\begin{pmatrix}
1-bc & b \\
c & 0
\end{pmatrix}
;\:
\begin{pmatrix}
0 & b \\
c & 1-bc
\end{pmatrix}
;\:
\begin{pmatrix}
0 & b \\
c & 0
\end{pmatrix}
;\:
\begin{pmatrix}
1 & 0 \\
0 & 1
\end{pmatrix}.
$$
\end{theorem}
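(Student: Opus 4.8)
The plan is to reduce the Yang--Baxter relation to a finite system of polynomial identities in the four entries of $M$ and then solve that system over the integral domain $R$. Write $M = \begin{pmatrix} p & q \\ r & s \end{pmatrix}$, so that $S \colon R^2 \to R^2$ is the linear map $v \mapsto Mv$. Because $S$ is linear, both $S_1 = S \times \Id$ and $S_2 = \Id \times S$ are linear endomorphisms of $R^3$, represented by the $3 \times 3$ matrices obtained by placing $M$ in the top-left, respectively bottom-right, $2 \times 2$ block and a $1$ in the remaining diagonal entry. The Yang--Baxter (braid) relation $S_1 S_2 S_1 = S_2 S_1 S_2$ thus becomes an equality of two explicit $3 \times 3$ matrices whose entries are polynomials in $p, q, r, s$.

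First I would compute the two triple products $S_1 S_2 S_1$ and $S_2 S_1 S_2$ directly and compare them entry by entry. Several entries agree automatically (for instance the $(1,3)$ and $(3,1)$ entries, which equal $q^2$ and $r^2$ in both products), and the remaining entries collapse, after cancellation, to the system
\begin{gather*}
p(p + qr - 1) = 0, \quad s(s + qr - 1) = 0, \\
pqs = 0, \quad prs = 0, \quad ps(p - s) = 0.
\end{gather*}
Hence $M$ is a solution of the Yang--Baxter equation if and only if these five identities hold in $R$.

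The classification is then a case analysis according to which of $p$ and $s$ vanish, using crucially that $R$ is an integral domain, so a product vanishes only when a factor does. If $p = s = 0$, every identity holds automatically and $M$ has the third form $\begin{pmatrix} 0 & b \\ c & 0 \end{pmatrix}$. If $p \neq 0 = s$, the first identity forces $p = 1 - qr$ while the rest are vacuous, yielding the first form; the case $s \neq 0 = p$ is symmetric and yields the second form. Finally, if both $p$ and $s$ are nonzero, then $pqs = 0$ and $prs = 0$ force $q = r = 0$, next $ps(p - s) = 0$ forces $p = s$, and the first identity then gives $p = 1$, so $M$ is the identity matrix. Conversely, one checks directly that each of the four listed forms satisfies all five identities, which closes the equivalence.

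The only genuine computation is forming the two triple products and the bookkeeping that reduces their comparison to exactly these five identities; once that reduction is secured, the integral-domain hypothesis makes the case analysis routine. I expect the entry-by-entry comparison to be the main source of possible error, so I would carry it out carefully and verify that no extra constraint is hidden among the entries that appear to match.
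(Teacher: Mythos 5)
Your proposal is correct and follows essentially the same route as the paper: both reduce the braid relation to the identical system of five polynomial identities in the matrix entries (your $p(p+qr-1)=0$, $s(s+qr-1)=0$, $pqs=0$, $prs=0$, $ps(p-s)=0$ is exactly the paper's system in different letters) and then split into cases using that $R$ has no zero divisors. The only cosmetic difference is that the paper organizes the cases as a union of four subsystems while you branch on the vanishing of the diagonal entries, which arrives at the same four matrix forms.
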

\begin{proof}

We can write down the Yang--Baxter equation in the following form:
\begin{gather*}
0=
\begin{pmatrix}
a & b & 0 \\
c & d & 0 \\
0 & 0 & 1
\end{pmatrix}
\begin{pmatrix}
1 & 0 & 0 \\
0 & a & b \\
0 & c & d
\end{pmatrix}
\begin{pmatrix}
a & b & 0 \\
c & d & 0 \\
0 & 0 & 1
\end{pmatrix}
-
\begin{pmatrix}
1 & 0 & 0 \\
0 & a & b \\
0 & c & d
\end{pmatrix}
\begin{pmatrix}
a & b & 0 \\
c & d & 0 \\
0 & 0 & 1
\end{pmatrix}
\begin{pmatrix}
1 & 0 & 0 \\
0 & a & b \\
0 & c & d
\end{pmatrix}
= \\ =
\begin{pmatrix}
a(a+bc-1) & abd & 0 \\
acd & ad(d-a) & -abd \\
0 & -acd & -d(d+bc-1)
\end{pmatrix}
\end{gather*}
and obtain the following system of algebraic equations:
\begin{align*}
&abd=0; \\
&acd=0; \\
&a(a+bc-1)=0; \\
&d(d+bc-1)=0; \\
&ad(d-a)=0.
\end{align*}
Since the coefficients are taken from a ring with no zero divisors, the solution of the system can be decomposed into a union of solutions of four simpler systems of equations:
\begin{align*}
&1)\; a=0,\: d=1-bc; \\
&2)\; d=0,\: a=1-bc; \\
&3)\; a=0,\: d=0; \\
&4)\; b=0,\: c=0,\: a(a-1)=0,\: d(d-1)=0.
\end{align*}
Note that the solution of the system $4)$ is a union of $4$ points, $3$ of which are also solutions of $1),$ $2)$ or $3).$ With this in mind, $4)$ can be reduced to $a=1,b=1,c=0,d=0,$ which completes the proof.
\end{proof}

We will now investigate verbal solutions of the Yang--Baxter equation on two-step nilpotent groups. We are interested in such pairs of group words $w_1(x,y)=x^ay^b[y,x]^m,$ $w_2(x,y)=x^cy^d[y,x]^n$ that the map $S=\phi_{w_1}\times \phi_{w_2}$ is a solution of the Yang--Baxter equation on any two-step nilpotent group $G.$ If $w_1$ and $w_2$ are such words, then the maps
$(S\times \Id)(\Id\times S)(S\times \Id)$ and $(\Id\times S)(S\times \Id)(\Id\times S)$ from $F^3$ to $F^3$ must coincide for any free two-step nilpotent group $F$.

Abelianization of a free two-step nilpotent group is a free abelian group, so $S^{Ab}$ must be a solution of the Yang--Baxter equation on $\Z,$ and as such, the matrix $M_S=\begin{pmatrix}a& b \\ c& d\end{pmatrix}$ must be of at least one of the forms listed in theorem \ref{th YBE over integral domain}.
We will denote $S_1=S\times \Id,$ $S_2=\Id\times S$ and write down the corresponding Yang--Baxter equation for each of these matrices with free parameters $m$ and $n$.

Starting with $M_S=\begin{pmatrix}1& 0 \\ 0& 1\end{pmatrix},$ we have 

\begin{align*}
&S_1S_2S_1
\begin{pmatrix}
x \\
y \\
z
\end{pmatrix}
= S_1S_2
\begin{pmatrix}
x[y,x]^m \\
y[y,x]^n \\
z
\end{pmatrix}
= S_1
\begin{pmatrix}
x[y,x]^m \\
y[y,x]^n[z,y]^m \\
z[z,y]^n
\end{pmatrix}
=
\begin{pmatrix}
x[y,x]^{2m} \\
y[y,x]^{2n}[z,y]^m \\
z[z,y]^n
\end{pmatrix}; \\
&S_2S_1S_2
\begin{pmatrix}
x \\
y \\
z
\end{pmatrix}
=S_2S_1
\begin{pmatrix}
x \\
y[z,y]^m \\
z[z,y]^n
\end{pmatrix}
=S_2
\begin{pmatrix}
x[y,x]^m \\
y[y,x]^n[z,y]^m \\
z[z,y]^n
\end{pmatrix}
=
\begin{pmatrix}
x[y,x]^m \\
y[y,x]^{n}[z,y]^{2m} \\
z[z,y]^{2n}
\end{pmatrix}.
\end{align*}

The Yang--Baxter equation here implies $n=0$ and $m=0,$ so the only verbal solution corresponding to this matrix is

$$S(x,y)=(x,y).$$

Now assume $M_S=\begin{pmatrix}0& b \\ c& 0\end{pmatrix}.$ We have

\begin{align*}
&S_1S_2S_1
\begin{pmatrix}
x \\
y \\
z
\end{pmatrix}
=S_1S_2
\begin{pmatrix}
y^b[y,x]^m \\
x^c[y,x]^n \\
z
\end{pmatrix}
=S_1
\begin{pmatrix}
y^b[y,x]^m \\
z^b[z,x]^{cm} \\
x^{c^2}[y,x]^{cn}[z,x]^{cn}
\end{pmatrix}
=
\begin{pmatrix}
z^{b^2}[z,x]^{bcm}[z,y]^{b^2m} \\
y^{bc}[y,x]^{cm}[z,y]^{b^2n} \\
x^{c^2}[y,x]^{cn}[z,x]^{cn}
\end{pmatrix}; \\
&S_2S_1S_2
\begin{pmatrix}
x \\
y \\
z
\end{pmatrix}
=S_2S_1
\begin{pmatrix}
x \\
z^b[z,y]^m \\
y^c[z,y]^n
\end{pmatrix}
=S_1
\begin{pmatrix}
z^{b^2}[z,x]^{bm}[z,y]^{bm} \\
x^c[z,x]^{bn} \\
y^c[z,y]^n
\end{pmatrix}
=
\begin{pmatrix}
z^{b^2}[z,x]^{bm}[z,y]^{bm} \\
y^{bc}[y,x]^{c^2m}[z,y]^{bn} \\
x^{c^2}[y,x]^{c^2n}[z,x]^{bcn}
\end{pmatrix}.
\end{align*}
The Yang-Baxter equation in this case is equivalent to the following system of algebraic equations:
\begin{align*}
b(c-1)m=0; \\
b(b-1)m=0; \\
c(c-1)m=0; \\
b(b-1)n=0; \\
c(c-1)n=0; \\
c(b-1)n=0.
\end{align*}
For the sake of uniformity, we will rename the free parameters to $u$ and $v.$ With that in mind, the set of solutions to the system of algebraic equations above and the corresponding verbal solutions $S$ is as follows:
\begin{align*}
b=0,\: c=0:\;\; &S(x,y)=\bigl([y,x]^u,[y,x]^v\bigr);\\
b=0,\: c=1,\: n=0:\;\; &S(x,y)=\bigl([y,x]^u,x\bigr); \\
b=1,\: c=1:\;\; &S(x,y)=\bigl(y[y,x]^u,x[y,x]^v\bigr); \\
b=1,\: c=0,\: m=0:\;\; &S(x,y)=\bigl(y,[y,x]^u\bigr); \\
m=0,\: n=0:\;\; &S(x,y)=\bigl(y^u,x^v\bigr).
\end{align*}

Now assume $M_S=\begin{pmatrix}1-bc& b \\ c& 0\end{pmatrix}.$ Note that in a two-step nilpotent group the following expression holds: $$(xy)^k=x^ky^k[y,x]^{\frac{1}{2}k(k-1)},$$ which can be proven by induction. Indeed, for $k=0$ the expression holds. If the expression holds for $k,$ then 
$$(xy)^{k+1}=x^ky^kxy[y,x]^{\frac{1}{2}k(k-1)}=x^{k+1}y^{k+1}[y,x]^{\frac{1}{2}k(k-1)+k}=x^{k+1}y^{k+1}[y,x]^{\frac{1}{2}(k+1)(k+1-1)}$$
and
$$(xy)^{k-1}=x^ky^ky^{-1}x^{-1}[y,x]^{\frac{1}{2}k(k-1)}=x^{k-1}y^{k-1}[y,x]^{\frac{1}{2}k(k-1)-(k-1)}=x^{k-1}y^{k-1}[y,x]^{\frac{1}{2}(k-1)(k-2)}.$$ 
Now, for the map $S$ we have

\begin{gather*}
S_1S_2S_1
\begin{pmatrix}
x \\
y \\
z
\end{pmatrix}
=
S_1S_2
\begin{pmatrix}
x^{1-bc}y^b[y,x]^m \\
x^c[y,x]^n \\
z
\end{pmatrix}
=S_1
\begin{pmatrix}
x^{1-bc}y^b[y,x]^m \\
(x^c[y,x]^n)^{1-bc}z^b[z,x^c]^m \\
x^{c^2}[y,x]^{cn}[z,x^c]^n
\end{pmatrix}
= \\ =S_1
\begin{pmatrix}
x^{1-bc}y^b[y,x]^m \\
x^{c(1-bc)}z^b[y,x]^{(1-bc)n}[z,x]^{cm} \\
x^{c^2}[y,x]^{cn}[z,x]^{cn}
\end{pmatrix}
= \\ =
\begin{pmatrix}
(x^{1-bc}y^b[y,x]^m)^{1-bc}(x^{c(1-bc)}z^b[y,x]^{(1-bc)n}[z,x]^{cm})^b[x^{c(1-bc)}z^b,x^{1-bc}y^b]^m\\
(x^{1-bc}y^b[y,x]^m)^{c}[x^{c(1-bc)}z^b,x^{1-bc}y^b]^n\\
x^{c^2}[y,x]^{cn}[z,x]^{cn}
\end{pmatrix}
= \\ =
\begin{pmatrix}
x^{(1-bc)}y^{b(1-bc)}z^{b^2}
[y,x]^{(1-bc)m+\frac{1}{2}b^2c(1-bc)^2+b(1-bc)n-bc(1-bc)m}
[z,x]^{bcm+\frac{1}{2}b^2c(1-bc)(b-1)+b(1-bc)m}
[z,y]^{b^2m} \\
x^{c(1-bc)}y^{bc}
[y,x]^{mc+\frac{1}{2}bc(1-bc)(c-1)-bc(1-bc)n}
[z,x]^{b(1-bc)n}
[z,y]^{b^2n}
\\
x^{c^2}[y,x]^{cn}[z,x]^{cn}
\end{pmatrix};
\end{gather*}

\begin{gather*}
S_2S_1S_2
\begin{pmatrix}
x \\
y \\
z
\end{pmatrix}
=S_2S_1
\begin{pmatrix}
x \\
y^{1-bc}z^b[z,y]^m \\
y^c[z,y]^n
\end{pmatrix}
= S_2
\begin{pmatrix}
x^{1-bc}(y^{1-bc}z^b[z,y]^m)^b[y^{1-bc}z^b,x]^m \\
x^c[y^{1-bc}z^b,x]^n \\
y^c[z,y]^n
\end{pmatrix}
= \\ = S_2
\begin{pmatrix}
x^{(1-bc)}y^{b(1-bc)}z^{b^2}
[y,x]^{(1-bc)m}
[z,x]^{bm}
[z,y]^{bm+\frac{1}{2}b^2(1-bc)(b-1)} \\
x^c
[y,x]^{(1-bc)n}
[z,x]^{bn} \\
y^c[z,y]^n
\end{pmatrix}
= \\ =
\begin{pmatrix}
x^{(1-bc)}y^{b(1-bc)}z^{b^2}
[y,x]^{(1-bc)m}
[z,x]^{bm}
[z,y]^{bm+\frac{1}{2}b^2(1-bc)(b-1)} \\
(x^c[y,x]^{(1-bc)n}[z,x]^{bn})^{1-bc}
(y^c[z,y]^n)^b
[y^c,x^c]^m \\
(x^c[y,x]^{(1-bc)n}[z,x]^{bn})^c
[y^c,x^c]^n
\end{pmatrix}
= \\ =
\begin{pmatrix}
x^{(1-bc)}y^{b(1-bc)}z^{b^2}
[y,x]^{(1-bc)m}
[z,x]^{bm}
[z,y]^{bm+\frac{1}{2}b^2(1-bc)(b-1)} \\
x^{c(1-bc)}y^{(bc)}
[y,x]^{(1-bc)^2n+c^2m}
[z,x]^{b(1-bc)n}
[z,y]^{bn} \\
x^{c^2}
[y,x]^{c(1-bc)n+c^2n}
[z,x]^{bcn}
\end{pmatrix}.
\end{gather*}

The Yang--Baxter equation in this case is equivalent to the following system of algebraic equations:

\begin{align*}
&1)\: (1-bc)m+\frac{1}{2}b^2c(1-bc)^2+b(1-bc)n-bc(1-bc)m-(1-bc)m=0; \\
&2)\: bcm+\frac{1}{2}b^2c(1-bc)(b-1)+b(1-bc)m-bm=0; \\
&3)\: b^2m -bm -\frac{1}{2}b^2(1-bc)(b-1) =0; \\
&4)\: cm+\frac{1}{2}bc(1-bc)(c-1)-bc(1-bc)n=0; \\
&5)\: b^2n-bn=0; \\
&6)\: cn-c(1-bc)n-c^2n=0; \\
&7)\: cn-bcn=0.
\end{align*}
Or, in an alternative form,
\begin{align*}
&1)\: b(1-bc)(\frac{1}{2}bc+n-cm)=0; \\
&2)\: bc(b-1)(m+\frac{1}{2}b(1-bc))=0; \\
&3)\: b(b-1)(m-\frac{1}{2}b(1-bc))=0; \\
&4)\: \frac{1}{2}bc(1-bc)(c-1)-c(c-1)m-(1-bc)n=0; \\
&5)\: b(b-1)n=0; \\
&6)\: c^2(b-1)n=0; \\
&7)\: c(b-1)n=0.
\end{align*}

Equations $5)$ -- $7)$ all hold if and only if at least one of the following conditions is satisfied: either $b=c=0,$ or $b=1,$ or $n=0.$ We will examine these three cases separately, making corresponding substitutions into equations $1)$--$4)$.

Case $b=c=0.$ Equations $1)$--$3)$ hold automatically, and $4)$ is reduced to $n=0.$ We have $1-bc=1$ and $m$ a free parameter. This gives us the verbal solution $$S(x,y)=\bigl(x[x,y]^u,1\bigr).$$

Case $b=1.$ Equations $2)$ and $3)$ hold automatically, and we are left with the system
\begin{align*}
&1)\: (1-c)(\frac{1}{2}c+n-cm)=0;\\
&4)\: (c-1)(-\frac{1}{2}c(c-1)+n-cm)
\end{align*}
If $c=1,$ then the case is reduced to the previously examined case with the matrix $\begin{pmatrix}0 & 1 \\ 1 & 0\end{pmatrix}.$ If $n=cm-\frac{1}{2}c$ and $n=cm+\frac{1}{2}c(c-1),$ then we have $c(c-1)+c=0,$ hence $c=0$ and $n=0,$ and the case is reduced to a particular example of the case $n=0,$ which we will examine next.

Case $n=0.$ By substituting $n=0$ into equations $1)$--$4)$ we get the system
\begin{align*}
&1)\: bc(1-bc)(\frac{1}{2}b-m)=0; \\
&2)\: bc(b-1)(m+\frac{1}{2}b(1-bc))=0; \\
&3)\: b(b-1)(m-\frac{1}{2}b(1-bc))=0; \\
&4)\: -c(c-1)(m-\frac{1}{2}b(1-bc))=0. \\
\end{align*}
Equations $3)$ and $4)$ hold if and only if at least one of the following conditions holds: either $b,c\in \{0,1\},$ or $m=\frac{1}{2}b(1-bc).$

If $b=c=0$ or $b=c=1,$ the case is reduced to one of the previously examined cases. If $b=c-1=0$ or $c=b-1=0,$ then equations $1)$ and $2)$ hold automatically, $m$ stays a free parameter, and we get two new verbal solutions:
\begin{align*}
&S(x,y)=(x[y,x]^u,x);\\
&S(x,y)=(xy[y,x]^u,1).
\end{align*}

If $m=\frac{1}{2}b(1-bc),$ then the system is further reduced to
\begin{align*}
&1)\: \frac{1}{2}b^3c^2(1-bc)=0; \\
&2)\: b^2c(b-1)(1-bc)=0, \\
\end{align*}
which holds if and only if $b=0$ or $c=0$ or $1-bc=0.$ If $1-bc=0,$ then $m=0,$ and the case is reduced to a previously examined case. If $b=0,$ then $m=0,$ $c$ is a free parameter, and we have the verbal solution
$$S(x,y)=(x,x^u).$$
Finally, if $c=0,$ then $m$ is a free parameter, $b=2m,$ and we have the verbal solution
$$S(x,y)=(xy^{2u}[y,x]^u,1).$$

As for the matrix $\begin{pmatrix}0 & b \\ c & 1-bc\end{pmatrix},$ we will obtain the corresponding verbal solutions by using the symmetries of the Yang--Baxter equation.

\begin{lemma}
Let $X$ be a set and $S\colon X^2\rightarrow X^2$ is a solution of the Yang--Baxter equation on $X.$ Then $S^\sigma=\sigma S \sigma$ is a solution of the Yang--Baxter equation on $X,$ where $\sigma(x,y)=(y,x).$
\end{lemma}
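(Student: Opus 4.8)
The plan is to exploit the standard symmetry of the braid (Yang--Baxter) relation under reversing the order of the three tensor factors. I would work on $X^3$ and introduce the reversal involution $\tau \colon X^3 \to X^3$, $\tau(a,b,c) = (c,b,a)$, which satisfies $\tau^2 = \Id$, so $\tau = \tau^{-1}$. Writing $S_1 = S \times \Id$ and $S_2 = \Id \times S$ for the operators attached to $S$, and $(S^\sigma)_1 = S^\sigma \times \Id$, $(S^\sigma)_2 = \Id \times S^\sigma$ for those attached to $S^\sigma$, the core of the argument is the pair of conjugation identities
\[
\tau \, S_1 \, \tau = (S^\sigma)_2, \qquad \tau \, S_2 \, \tau = (S^\sigma)_1.
\]

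To establish these I would simply evaluate both sides on an arbitrary triple. For the first, $\tau$ sends $(a,b,c)$ to $(c,b,a)$; applying $S_1$ acts by $S$ on the first two entries, producing $\bigl(S(c,b),a\bigr)$; and applying $\tau$ again sends the untouched entry $a$ back to the front. Comparing with $(S^\sigma)_2(a,b,c) = \bigl(a,\,\sigma S \sigma(b,c)\bigr)$ and using $\sigma S \sigma(b,c) = \sigma S(c,b)$ shows the two agree. The second identity is checked in exactly the same way with the roles of the first and last factors interchanged. This is the only computational step, and it is routine coordinate bookkeeping.

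Granting the two identities, the conclusion is immediate. Since $S$ solves the Yang--Baxter equation we have $S_1 S_2 S_1 = S_2 S_1 S_2$, and conjugating this equality by the involution $\tau$ telescopes (inserting $\tau\tau = \Id$ between consecutive factors) to
\[
(\tau S_1 \tau)(\tau S_2 \tau)(\tau S_1 \tau) = (\tau S_2 \tau)(\tau S_1 \tau)(\tau S_2 \tau),
\]
which by the displayed identities reads $(S^\sigma)_2 (S^\sigma)_1 (S^\sigma)_2 = (S^\sigma)_1 (S^\sigma)_2 (S^\sigma)_1$. This is precisely the Yang--Baxter equation for $S^\sigma$, written with its two sides interchanged, so $S^\sigma$ is a solution. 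The argument is insensitive to the chosen convention for the order of composition, since conjugation by $\tau$ distributes over any composite in the same manner.

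The main obstacle, such as it is, is purely notational: one must carefully track which coordinate of $X^3$ each operator acts on and confirm that the reversal $\tau$ intertwines $S_1$ with $(S^\sigma)_2$ rather than with $(S^\sigma)_1$. Beyond this bookkeeping there is no genuine difficulty, and I would expect the whole proof to occupy only a few lines once the conjugation identities are recorded.
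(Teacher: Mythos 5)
Your proposal is correct and follows essentially the same route as the paper: both introduce the reversal $\tau(x,y,z)=(z,y,x)$, verify the intertwining identities $\tau S_1\tau=(S^\sigma)_2$ and $\tau S_2\tau=(S^\sigma)_1$ by direct coordinate computation, and then conjugate the Yang--Baxter relation for $S$ by the involution $\tau$ to obtain the relation for $S^\sigma$ with its two sides swapped. There is nothing to add.
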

\begin{proof}
Define the map $\tau\colon X^3\rightarrow X^3$ the following way: $\tau(x,y,z)=(z,y,x).$ We can assume that $S(x,y)=(f(x,y),g(x,y)).$ Then $S^\sigma(x,y)=(g(y,x),f(y,x)).$ Note that
$$\tau (S\times \Id)\tau (x,y,z)=\tau(f(z),g(y),x)=(x,g(y),f(z))=(\Id\times S^\sigma)(x,y,z).$$
Similarly, we have $\tau(\Id\times S)\tau=S^\sigma\times \Id.$ Now,
\begin{gather*}
(S^\sigma\times \Id)(\Id\times S^\sigma)(S^\sigma\times \Id) = \tau(\Id\times S)(S\times \Id)(\Id\times S)\tau; \\
(\Id\times S^\sigma)(S^\sigma\times \Id)(\Id\times S^\sigma) = \tau (S\times \Id)(\Id\times S)(S\times \Id) \tau, 
\end{gather*}
and since $S$ is a solution of the Yang--Baxter equation, the right sides of these equalities coincide, and hence
$$(S^\sigma\times \Id)(\Id\times S^\sigma)(S^\sigma\times \Id)=(\Id\times S^\sigma)(S^\sigma\times \Id)(\Id\times S^\sigma).$$
\end{proof}
\begin{corollary}
If $S(x,y)=\bigl(x^ay^b[y,x]^m,x^cy^d[y,x]^n\bigr)$ is a verbal solution of the Yang--Baxter equation on a 2-step nilpotent group, then $\bar{S}(x,y)=\bigl(x^dy^c[y,x]^{dc-n},x^by^a[y,x]^{ab-m}\bigr)$ is also a verbal solution.
\end{corollary}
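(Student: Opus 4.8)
The plan is to deduce the corollary directly from the preceding Lemma, by recognizing the map $\bar S$ as the symmetrized solution $S^\sigma = \sigma S \sigma$. Since $S$ is assumed to be a solution of the Yang--Baxter equation, the Lemma immediately gives that $S^\sigma$ is also a solution. It therefore suffices to show two things: that $S^\sigma$ is verbal (which is clear once its components are written in normal form), and that $S^\sigma$ literally coincides with the map $\bar S$ displayed in the statement.

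First I would write $S(x,y) = \bigl(f(x,y), g(x,y)\bigr)$ with $f(x,y) = x^a y^b [y,x]^m$ and $g(x,y) = x^c y^d [y,x]^n$. By the computation carried out in the proof of the Lemma, $S^\sigma(x,y) = \bigl(g(y,x), f(y,x)\bigr)$, so the whole task reduces to rewriting the swapped words $g(y,x)$ and $f(y,x)$ in the standard verbal form $x^{?} y^{?} [y,x]^{?}$. To do this I would record two elementary identities valid in any two-step nilpotent group $G$: first $[x,y] = [y,x]^{-1}$, and second, since $[y,x]$ is central, $y^a x^b = x^b y^a [y,x]^{ab}$. The latter follows from $yx = xy[y,x]$ by moving $y^a$ past each of the $b$ copies of $x$ and using that the commutator is central and bilinear modulo the center.

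Applying these identities to $f(y,x) = y^a x^b [x,y]^m = y^a x^b [y,x]^{-m}$ yields $f(y,x) = x^b y^a [y,x]^{ab-m}$, and likewise $g(y,x) = y^c x^d [y,x]^{-n} = x^d y^c [y,x]^{cd-n}$. Substituting into $S^\sigma(x,y) = \bigl(g(y,x), f(y,x)\bigr)$ gives
\[
S^\sigma(x,y) = \bigl(x^d y^c [y,x]^{cd-n},\, x^b y^a [y,x]^{ab-m}\bigr),
\]
which is exactly $\bar S(x,y)$ because $cd = dc$. As both components are visibly group words, $\bar S$ is of the form $\phi_{w_1} \times \phi_{w_2}$, hence a verbal solution, and the proof is complete. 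The only delicate point is the bookkeeping of commutator exponents: one must correctly track the sign flip from $[x,y] = [y,x]^{-1}$ together with the quadratic contribution $[y,x]^{ab}$ (respectively $[y,x]^{cd}$) produced when commuting the two power factors past one another. Beyond this routine verification I expect no genuine obstacle, since the structural content is fully supplied by the Lemma.
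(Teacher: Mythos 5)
Your proposal is correct and follows exactly the route the paper intends: the corollary is stated as an immediate consequence of the lemma on $S^\sigma=\sigma S\sigma$, and your rewriting of $g(y,x)$ and $f(y,x)$ into the normal form $x^{?}y^{?}[y,x]^{?}$ (using $yx=xy[y,x]$ and centrality of the commutator) is the only verification needed. The exponent bookkeeping ($ab-m$ and $dc-n$) matches the statement, so nothing is missing.
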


Now, by combining all the solutions obtained and applying the symmetries, we can finally formulate the theorem.
\begin{theorem} \label{Sol}
If $(w_1,w_2)$ is a pair of group words on two letters such that for any two-step nilpotent group $G$ the induced map $S\colon G^2\rightarrow G^2,$ $S(x,y)=\bigl(w_1(x,y),w_2(x,y)\bigr)$ is a solution of the Yang--Baxter equation, then there are $u,v\in\Z$ such that $S(x,y)$ has one (or more) of the following forms:
\begin{align*}
&S(x,y)=(x,y);\\
&S(x,y)=\bigl([y,x]^u,[y,x]^v\bigr);\\
&S(x,y)=\bigl(y[y,x]^u,x[y,x]^v\bigr); \\
&S(x,y)=\bigl(y^u,x^v\bigr); \\
&S(x,y)=\bigl([y,x]^u,x\bigr);
&&S(x,y)=\bigl(y,[y,x]^u\bigr);\\
&S(x,y)=\bigl(x[y,x]^u,1\bigr); 
&&S(x,y)=\bigl(1,y[y,x]^u\bigr); \\
&S(x,y)=\bigl(x[y,x]^u,x\bigr);
&&S(x,y)=\bigl(y,y[y,x]^u\bigr); \\
&S(x,y)=\bigl(xy[y,x]^u,1\bigr); 
&&S(x,y)=\bigl(1,xy[y,x]^u\bigr); \\
&S(x,y)=(x,x^u);
&&S(x,y)=(y^u,y);\\
&S(x,y)=\bigl(xy^{2u}[y,x]^u,1\bigr);
&&S(x,y)=\bigl(1,x^{2u}y[y,x]^u\bigr).
\end{align*}

Conversely, all of the maps above define verbal solutions of the Yang--Baxter equation on any two-step nilpotent group for any values of the parameters $u,v\in\Z$.

\end{theorem}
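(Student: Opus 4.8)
The plan is to reduce the classification to a short list of cases controlled by the abelianization, and then to solve an explicit integer polynomial system in each case. Since $G$ is two-step nilpotent, every two-variable verbal word has the normal form $x^a y^b [y,x]^m$, so a verbal solution $S = \phi_{w_1} \times \phi_{w_2}$ is encoded by six integers via $w_1 = x^a y^b [y,x]^m$ and $w_2 = x^c y^d [y,x]^n$. Requiring $S$ to solve the Yang--Baxter equation on \emph{every} two-step nilpotent group in particular forces it on the free one $F$, and there the abelianization proposition makes $S^{Ab}$ a verbal solution on $F^{Ab}$, a free abelian group. Specializing to $\Z$ and applying Theorem \ref{th YBE over integral domain}, the matrix $M_S = \begin{pmatrix} a & b \\ c & d \end{pmatrix}$ must be one of the four listed shapes. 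This fixes $a,b,c,d$ up to the free parameters $b,c$, leaving only the commutator exponents $m,n$ to be determined in each case.

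First I would handle the three shapes that are symmetric under transposition --- the identity, $\begin{pmatrix} 0 & b \\ c & 0 \end{pmatrix}$, and $\begin{pmatrix} 1-bc & b \\ c & 0 \end{pmatrix}$ --- directly. For each I would evaluate the triple compositions $S_1 S_2 S_1$ and $S_2 S_1 S_2$ on a generic triple $(x,y,z)$ of a free two-step nilpotent group, rewrite both sides in normal form, and compare the exponents of the three basic commutators $[y,x]$, $[z,x]$, $[z,y]$. Since the abelian parts already agree by construction, the only possible obstruction lies in these commutator exponents, so the Yang--Baxter condition collapses to a system of polynomial equations in $b,c,m,n$ with the factored structure exhibited in the running text. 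Solving each system --- splitting on whether $b$ and $c$ equal $0$ or $1$ and on whether $m$ or $n$ vanishes --- yields, after renaming the surviving free parameters to $u,v$, exactly the identity solution together with the left-hand column of the list.

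The main obstacle is the shape $\begin{pmatrix} 1-bc & b \\ c & 0 \end{pmatrix}$. Here the matrix entries are genuinely quadratic in $b,c$ and the compositions involve nested powers such as $(x^c[y,x]^n)^{1-bc}$, so one must repeatedly invoke the identity $(xy)^k = x^k y^k [y,x]^{k(k-1)/2}$ to reach normal form before any exponent can be read off. This is the longest and most error-prone computation, and it produces the seven-equation system of the text; I would organize the solution exactly as there, first using equations $5)$--$7)$ to force $b=c=0$, or $b=1$, or $n=0$, and then substituting each alternative back into equations $1)$--$4)$ to enumerate the admissible pairs $(m,n)$.

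Finally, I would not analyze the fourth shape $\begin{pmatrix} 0 & b \\ c & 1-bc \end{pmatrix}$ from scratch. Instead I would invoke the flip symmetry $\sigma(x,y)=(y,x)$: by the preceding Lemma and its Corollary, conjugation by $\sigma$ sends a solution $(x^a y^b [y,x]^m, x^c y^d [y,x]^n)$ to the solution $\bar S(x,y) = (x^d y^c [y,x]^{dc-n}, x^b y^a [y,x]^{ab-m})$, and this operation interchanges the two transposed matrix shapes. Applying it to the solutions already found for $\begin{pmatrix} 1-bc & b \\ c & 0 \end{pmatrix}$ delivers precisely the barred right-hand column with no further computation. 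Collecting the four cases gives the asserted list. For the converse I would verify that every listed map solves the Yang--Baxter equation: the genuinely new families can be checked by a direct normal-form computation in a free two-step nilpotent group (several of them are also the skew-brace solutions produced by Theorem \ref{Skew brace to braid equation}), and the remaining entries, being images under $\sigma$ of solutions already verified, are automatically solutions by the Lemma.
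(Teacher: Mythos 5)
Your proposal follows the paper's own proof essentially step for step: reduce to the normal form $x^ay^b[y,x]^m$, pass to the abelianization and invoke Theorem \ref{th YBE over integral domain} to pin down the four matrix shapes, solve the resulting polynomial systems in $m,n$ for the first three shapes by direct computation in the free two-step nilpotent group, and obtain the fourth shape's solutions from the flip symmetry $\sigma$. The only cosmetic slip is calling $\begin{pmatrix}1-bc & b\\ c & 0\end{pmatrix}$ ``symmetric under transposition'' (it is the flip that exchanges it with the fourth shape, while fixing the first two), but this does not affect the argument.
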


\bigskip


\begin{ack}
This work is supported by the Theoretical Physics and Mathematics
Advancement Foundation BASIS No 23-7-2-14-1.
The first author was supported by the state contract of the Sobolev
Institute of Mathematics, SB RAS (No. I.1.5, project FWNF-2022-0009).

The authors thank  V. Gubarev for useful discussions and suggestions. 

\end{ack}


\end{document}